\documentclass[a4paper]{amsart}

\usepackage[left=2cm,right=2cm,top=3.5cm,bottom=3cm]{geometry}

\usepackage{amsmath,amssymb,amscd,amsfonts}
\usepackage{mathrsfs}
\usepackage{latexsym}
\usepackage{graphicx}
\usepackage[all]{xy}
\usepackage{eucal}
\usepackage{verbatim}
\usepackage{amsaddr}

\allowdisplaybreaks

\newcommand{\N}{\mathbb{N}}
\newcommand{\Z}{\mathbb{Z}}
\newcommand{\Q}{\mathbb{Q}}
\newcommand{\F}{\mathbb{F}} 
\newcommand{\M}{\mathfrak{M}}

\renewcommand{\S}{\mathcal{S}}
\newcommand{\Sel}{\mathfrak{S}}
\newcommand{\sri}{\twoheadrightarrow}
\newcommand{\iri}{\hookrightarrow}
\renewcommand{\l}{\ell}

\renewcommand{\L}{\Lambda}
\newcommand{\G}{\Gamma}
\newcommand{\g}{\gamma}
\newcommand{\dl}[1]{\lim_{\buildrel \longrightarrow\over{#1}}}
\newcommand{\il}[1]{\lim_{\buildrel \longleftarrow\over{#1}}}

\renewcommand{\leq}{\leqslant}
\renewcommand{\geq}{\geqslant}

\newtheorem{thm}{Theorem}[section]
\newtheorem{prop}[thm]{Proposition}
\newtheorem{lem}[thm]{Lemma}
\newtheorem{cor}[thm]{Corollary}
\newtheorem{defin}[thm]{Definition}
\newtheorem{rem}[thm]{Remark}

\input cyracc.def
\font\tencyr=wncyr10
scaled \magstephalf
\def\cyr{\tencyr\cyracc}
\newcommand{\ts}{\mbox{\cyr Sh}}

\DeclareMathOperator{\Coind}{Coind}
\DeclareMathOperator{\Ind}{Ind}
\DeclareMathOperator{\Hom}{Hom}
\DeclareMathOperator{\Gal}{Gal}
\DeclareMathOperator{\cd}{cd}

\DeclareMathOperator{\ch}{\mathfrak{ch}}

\title[Euler characteristic and Akashi series]{Euler characteristic and Akashi series for Selmer groups over global function fields}

\author{Andrea Bandini}

\address{ {\footnotesize Universit\`a degli Studi di Parma \\
Dipartimento di Matematica\\
Parco Area delle Scienze, 53/A \\
43124 Parma - Italy\\
andrea.bandini@unipr.it
}}

\author{Maria Valentino} \thanks{M. Valentino is supported by an outgoing Marie-Curie fellowship of INdAM}

\address{{\footnotesize King's College London\\
Department of Mathematics\\
Strand, London WC2R 2LS\\
United Kingdom\\
maria.valentino@kcl.ac.uk }}

\subjclass[2010]{Primary 11R23; Secondary 11R34.}

\keywords{Euler characteristic, Akashi series, Selmer groups, abelian varieties, function fields.}

\begin{document}

\begin{abstract}
Let $A$ be an abelian variety defined over a global function field $F$ of positive characteristic $p$ and let
$K/F$ be a $p$-adic Lie extension with Galois group $G$. We provide a formula for the Euler characteristic
$\chi(G,Sel_A(K)_p)$ of the $p$-part of the Selmer group of $A$ over $K$. In the special case $G=\Z_p^d$
and $A$ a constant ordinary variety, using Akashi series, we show how the Euler characteristic of the dual
of $Sel_A(K)_p$ is related to special values of a $p$-adic $\mathcal{L}$-function.
\end{abstract}

\maketitle

\section{Introduction}
Let $p\in \Z$ be  a prime and let $G$ be a profinite $p$-adic Lie group of finite dimension $d\geq 1$ and without elements of order $p$.
Let $M$ be a $G$-module and consider the following properties\begin{itemize}
\item[{\bf 1.}] $H^i(G,M)$ is finite for any $i\geqslant 0$;
\item[{\bf 2.}] $H^i(G,M)=0$ for all but finitely many $i$.
\end{itemize}

\begin{defin}\label{EuCharDef}
If a $G$-module $M$ verifies {\bf 1} and {\bf 2}, the {\em Euler characteristic} of $M$ is defined as
\[ \chi(G,M):=\prod_{i\geq 0} |H^i(G,M)|^{(-1)^i} \,. \]
We will use the following notation for the {\em $n$-th Euler characteristic}
\[ \chi^{(n)}(G,M):=\prod_{i\geq n} |H^i(G,M)|^{(-1)^i} \,.\]
\end{defin}

\noindent Denote by $\L(G)$ the Iwasawa algebra associated to $G$. Let $\M_H(G)$ be the category of
all finitely generated $\L(G)$-modules which are finitely generated also as
$\L(H)$-modules, where $H$ is a closed normal subgroup of $G$ such that $\G:=G/H\simeq \Z_p$.
If $h_1$ and $h_2$ are any two non-zero elements of $Q(\L(\G))$ (the field of fraction of the Iwasawa algebra $\L(\G)\,$),
we write $h_1 \sim h_2$ if $h_1h_2^{-1}$ is a unit in $\L(\G)$. \\
In \cite[Section 4]{CSS}, the authors attach to any $M$ in $\M_H(G)$ a non-zero element $f_{M,H}$ of $Q(\L(\G))$
which is defined as follows. For any $M\in \M_H(G)$, the homology groups $H_i(H,M)$ are finitely generated torsion
$\L(\G)$-modules for all $i\geqslant 0$ (see \cite[Lemma 3.1]{CFKSV}). We denote by
$\ch_{\L(\G)}(H_i(H,M))$ their characteristic elements.

\begin{defin}
With notations as above, the {\em Akashi series} of $M\in \M_H(G)$ is
\[  f_{M,H}:= \prod_{i\geqslant 0} \ch_{\L(\G)}(H_i(H,M))^{(-1)^i}\,. \]
\end{defin}

\noindent The product in the above definition is finite because $H_i(H,M) = 0$ for $i > d$, and it is well
defined up to $\sim$, because each $\ch_{\L(\G)}(H_i(H,M))$ is well defined up to multiplication by a unit
in $\L(\G)$.

Let $A/F$ be an abelian variety defined over a global function field $F$ of characteristic $p$. For any field extension
$L/F$ we denote by $A(L)[p^\infty]$ the $p$-torsion of $A$ defined over $L$ and
by $Sel_A(L)_p$ the $p$-part of the Selmer group of $A$ over $L$.
When $G$ is the Galois group of a field extension $L/F$, the study of the Euler characteristic $\chi(G,Sel_A(L)_p)$ and of the
Akashi series associated to these data, is a first step towards understanding the relation, predicted by the Iwasawa Main Conjecture,
between a characteristic element for (the Pontrjagin dual of) $Sel_A(L)_p$ and a suitable $p$-adic $L$-function.
The aim of this paper is to provide formulas for the Euler characteristic (actually the {\em truncated Euler characteristic}
$\frac{\chi}{\chi^{(2)}}$ in the terminology of \cite[Section 3]{CSS}) of Selmer groups and for the Akashi series
of the Pontrjagin dual of the Selmer group in the function field case (the number field case has been extensively
studied, see, for example, \cite{VO}, \cite{Z1}, \cite{Z3} and the references there). In particular we prove the following
(see Section \ref{DescDiag} and Theorem \ref{ThmEuChp} for a precise definition of all the terms involved)

\begin{thm}\label{IntroThm1}
Let $G=\Gal(K/F)$ be as above and assume that\begin{itemize}
\item[{\it i)}] $K/F$ is unramified outside a finite set of places $S$ which also contains all primes of bad reduction for $A$;
\item[{\it ii)}] $Sel_A(F)_p$ and $Sel_{A^t}(F)_p$ are finite ($A^t$ is the dual abelian variety of $A$);
\item[{\it iii)}] $\chi(G,A(K)[p^\infty])$ is well defined;
\item[{\it iv)}] $H^i(G_v,A(K_w))[p^\infty]$ (for any $w$ dividing a ramified place $v\in S$) is finite for $i=1,2$.
\end{itemize}
Then,
\[ \begin{array}{ll} \displaystyle{\frac{\chi(G,Sel_A(K)_p)}{\chi^{(2)}(G,Sel_A(K)_p)}} & =
\displaystyle{\frac{\chi(G,A(K)[p^\infty])}{\chi^{(3)}(G,A(K)[p^\infty])}
\cdot\frac{|\ts(A/F)[p^\infty]||NS(\psi_K)|}{|A^t(F)[p^\infty]||A(F)[p^\infty]|} }
\cdot \prod_{\begin{subarray}{c} v\in S \\v\ inert \end{subarray}}p^{\nu_v}\\
\ &  \cdot\displaystyle{\prod_{\begin{subarray}{c} v\in S \\v\ ram. \end{subarray}}
\frac{|H^1(G_v,A(K_w))[p^\infty]|}{|H^2(G_v,A(K_w))[p^\infty]|} } \,. \end{array} \]
\end{thm}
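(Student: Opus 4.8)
The plan is to run the argument through the descent diagram of Section~\ref{DescDiag}: the commutative diagram with exact rows obtained by restriction from $F$ to $K$, relating $Sel_A(F)_p$, the global $S$-ramified (flat) cohomology $H^1(F_S/F,A[p^\infty])$, and $\bigoplus_{v\in S}H^1(F_v,A)[p^\infty]$ to the $G$-invariants of the corresponding objects over $K$. Rather than attacking $\chi/\chi^{(2)}(G,Sel_A(K)_p)=|Sel_A(K)_p^G|\,|H^1(G,Sel_A(K)_p)|^{-1}$ directly, I would split the defining sequence of the Selmer group over $K$ into the two short exact sequences of $G$-modules
\[ 0\to Sel_A(K)_p\to H^1(F_S/K,A[p^\infty])\to \mathcal{W}_K\to 0,\qquad 0\to \mathcal{W}_K\to\bigoplus_{w}H^1(K_w,A)[p^\infty]\to\mathcal{Z}_K\to 0, \]
with $\mathcal{W}_K=\mathrm{im}(\psi_K)$ and $\mathcal{Z}_K=\mathrm{coker}(\psi_K)$, take $G$-cohomology of each, and feed the resulting long exact sequences into the (near-)multiplicativity of the truncated Euler characteristic established in \cite[Section 3]{CSS}. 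Global Poitou--Tate duality over $F$ and the Cassels--Tate pairing, under hypotheses i)--ii), identify $\mathcal{Z}_K$ and the global contributions and thus produce the finite factor $\frac{|\ts(A/F)[p^\infty]|\,|NS(\psi_K)|}{|A^t(F)[p^\infty]|\,|A(F)[p^\infty]|}$; combined with the descent diagram this also forces the finiteness of $H^0(G,Sel_A(K)_p)$ and $H^1(G,Sel_A(K)_p)$, so that $\chi/\chi^{(2)}(G,Sel_A(K)_p)$ is defined.

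For the middle module I would use the Hochschild--Serre spectral sequence
\[ H^i(G,H^j(F_S/K,A[p^\infty]))\Longrightarrow H^{i+j}(F_S/F,A[p^\infty]), \]
whose edge terms $E_2^{i,0}=H^i(G,A(K)[p^\infty])$ are finite by hypothesis iii), together with the global Euler--Poincar\'e formula for $A[p^\infty]$ over $F_S/F$ in characteristic $p$. Tracking only cohomological degrees $0,1,2$ -- the only ones that survive into $\chi/\chi^{(2)}(G,Sel_A(K)_p)$ once the two short exact sequences above are accounted for -- this contributes precisely $\chi(G,A(K)[p^\infty])/\chi^{(3)}(G,A(K)[p^\infty])=|H^0(G,A(K)[p^\infty])|\,|H^1(G,A(K)[p^\infty])|^{-1}|H^2(G,A(K)[p^\infty])|$.

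The local factors come from analysing, for each $v\in S$, the corresponding vertical map $c_v$ by combining the Kummer sequences with the inflation--restriction sequence for $G_v=\Gal(K_w/F_v)$ acting on $A(K_w)$. When $v$ is ramified, $\ker c_v$ and $\mathrm{coker}\,c_v$ are governed by $H^1(G_v,A(K_w))[p^\infty]$ and $H^2(G_v,A(K_w))[p^\infty]$, finite by hypothesis iv), producing the factor $|H^1(G_v,A(K_w))[p^\infty]|\,|H^2(G_v,A(K_w))[p^\infty]|^{-1}$. When $v$ is inert the decomposition group is topologically procyclic, so $H^i(G_v,-)=0$ for $i\geq 2$ and the residual $H^0$--$H^1$ discrepancy is the explicit power $p^{\nu_v}$ of Section~\ref{DescDiag}; for the remaining places of $S$ (those split in $K/F$, with trivial decomposition group) $c_v$ is an isomorphism and contributes nothing. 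Collecting the three families reproduces the product over $S$ in the statement.

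I expect the main obstacle to be the bookkeeping rather than any single deep input. The truncated Euler characteristic is in general only sub-multiplicative along a short exact sequence $0\to N'\to N\to N''\to 0$, the discrepancy being the order of the image of the connecting map $H^1(G,N'')\to H^2(G,N')$; one must check that every such discrepancy either vanishes or is exactly one of the $H^2$-type terms already appearing -- which is precisely why the statement carries $\chi/\chi^{(3)}$ of $A(K)[p^\infty]$ (not $\chi/\chi^{(2)}$) and the factors $|H^2(G_v,A(K_w))[p^\infty]|^{-1}$. In parallel one must verify the finiteness of every auxiliary group entering the diagram chase -- this is where hypotheses ii)--iv), the finiteness of $\ts(A/F)[p^\infty]$, and the Poitou--Tate control of $\mathcal{Z}_K$ are all used -- and handle the characteristic-$p$ features throughout: $A[p^\infty]$ is a $p$-divisible group which is not \'etale, so $Sel_A(-)_p$ must be built from flat cohomology, the identification of $Sel_A(-)_p$ with $S$-ramified flat cohomology needs hypothesis i), and the global and local Euler--Poincar\'e formulas must be invoked in their characteristic-$p$ forms.
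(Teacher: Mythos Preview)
Your proposal is correct and follows essentially the same route as the paper: the descent diagrams of Section~\ref{DescDiag}, Hochschild--Serre for the global and local restriction maps $\beta$ and $\gamma$, and the Cassels--Tate type duality of \cite{GAV1} to identify $|\mathrm{Coker}(\psi_F)|=|A^t(F)[p^\infty]|$. One sharpening: the global input you call ``Euler--Poincar\'e'' is really the vanishing $H^i(G,H^1_{fl}(X_K,A[p^\infty]))=0$ for $i\geq 1$ coming from $\cd_p(G_S(F))\leq 1$ (Lemma~\ref{HGzero}), which collapses the spectral sequence and gives $\mathrm{Coker}(\beta)=H^2(G,A(K)[p^\infty])$ on the nose rather than up to an alternating product.
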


This formula does not involve (at least directly) special values of $p$-adic or global $L$-functions, which
are one of the main features of the classical formulas for number fields (and also for function fields
when one considers the $\ell$-part of the Selmer group for some prime $\ell\neq p$, see \cite{V}).
One of the main problems is the (still ongoing) search for the right analogue of a $p$-adic $L$-function
in our setting. One relevant exception is provided by \cite{LLTT} for the case of constant abelian varieties.
Their $p$-adic $\mathcal{L}$-function will appear in a special case of the formula for the Akashi series
(see Corollary \ref{AkSerStickEl}).

\subsection{Setting and notations}
Before moving on we briefly describe the setting in which we shall work.\\
Let $F$ be a global function field of characteristic $p>0$. Consider a $p$-adic Lie extension  $K/F$ which is unramified outside a finite
set of places and let $G$ denote its Galois group. We assume that $G$ has finite dimension $d$ (as $p$-adic Lie group)
and no elements of order $p$. Under these hypotheses $G$ has finite (Galois) cohomological dimension,
which is equal to its dimension as $p$-adic Lie group (\cite[Corollaire (1) p. 413]{Se2}).\\
Consider an abelian variety $A/F$ and let $S$ be a finite set of places of $F$ containing the
primes of bad reduction for $A$ and those which ramify in $K/F$. We denote by $A^t$ its dual
abelian variety and, as usual, $A[p^n]$ will be the scheme of $p^n$-torsion points of $A$, with
$A[p^\infty]=\displaystyle{\dl{n} A[p^n]}$.

For any field $L$, we denote by $\Sigma_L$ the set of places of $L$ and, for any $v\in\Sigma_L$, we let $L_v$ be
the completion of $L$ at $v$.\\
We define Selmer groups via the usual cohomological techniques and, since we deal mainly with
the flat scheme of torsion points, we shall use the flat cohomology groups $H^i_{fl}$ (since $A$ and $p$
are fixed throughout the paper, except for a few appearences of $A^t$, we usually forget about them in the notation
for the Selmer group).

\begin{defin}
For any finite extension $L/F$, the {\em $p$-part of the Selmer group} of $A$ over $L$ is
\[ Sel(L):=Sel_A(L)_p:= Ker\left\{ H^1_{fl}(X_L,A[p^\infty]) \rightarrow
\prod_{w\in \Sigma_L} H^1_{fl}(X_{L_w},A[p^\infty])/Im(\kappa_w)\right\}\]
where $X_L:=Spec(L)$ and $\kappa_w$ is the usual (local) Kummer map. For infinite extensions we define the Selmer groups by
taking direct limits on the finite subextensions.
\end{defin}

\noindent Letting $L$ vary through subextensions of $K/F$, the groups $Sel_A(L)_\l$ admit natural actions
by $\mathbb{Z}_p$ and $G$. Hence they are modules over the Iwasawa algebra $\L(G)=\Z_p[[G]] := \displaystyle{\il{U} \Z_p [G/U]}$,
where the limit is taken on the open normal subgroups of $G$.

Let $F_S$ be the maximal separable pro-$p$-extension of $F$ unramified outside $S$,
so that $K\subseteq F_S\,$. We recall that the $p$-cohomological (Galois) dimension of $G_S(F):=\Gal(F_S/F)$ is
$\cd_p(G_S(F))=1$ (\cite[Theorem 10.1.12 (iv)]{NSW}). For any field extension $L/F$ contained in $F_S$, we denote by
$G_S(L)$ the Galois group $\Gal(F_S/L)$.

Let $H$ be a closed subgroup of $G$. For every $\L(H)$-module
$N$ we consider the $\L(G)$-modules (some texts, e.g. \cite{NSW}, switch the definitions of $\Ind^H_G(N)$
and $\Coind^H_G(N)$)
\[ \Coind^H_G(N):={\rm Map}_{\L(H)}(\L(G), N)\quad {\rm and}\quad
\Ind_H^G(N):=\L(G)\otimes_{\L(H)} N\,. \]

For any $v\in\Sigma_F$, fix a place $w\in\Sigma_K$ lying above $v$ and let $G_v:=\Gal(K_w/F_v)$ be the associated
decomposition group. We use the isomorphism
\[ H_{fl}^1(X_{L_w},A[p^\infty])/Im(\kappa_w) \simeq H^1_{fl}(X_{L_w},A)[p^\infty] \]
(coming from the Kummer sequence) and the more convenient notation of $\Coind$ modules, to get
our working definition for $Sel(K)$, i.e.,
\begin{equation}\label{EqDefSel}
Sel(K):=Sel_A(K)_p = Ker\left\{ H^1_{fl}(X_K,A[p^\infty]) {\buildrel \psi_K\over{-\!\!\!-\!\!\!\longrightarrow}}
\prod_{v\in \Sigma_F} \Coind^{G_v}_G H^1_{fl}(X_{K_w},A)[p^\infty] \right\} \,.
\end{equation}

If $L/F$ is a finite extension the group $Sel(L)$ is a cofinitely generated $\Z_p$-module
(see, e.g. \cite[III.8 and III.9]{Mi1}). One defines the {\em Tate-Shafarevich group} $\ts(A/L)$
as the group that fits into the exact sequence
\[ A(L)\otimes \Q_p / \Z_p \iri Sel(L) \sri \ts(A/L)[p^\infty]\ .\]
Whenever we assume that $Sel(L)$ is finite (we shall mainly use this hypothesis with $L=F$), we have that the
$\Z$-rank of $A(L)$ is 0, hence
\begin{equation}\label{EqSha}
A(L)\otimes \Q_p/\Z_p = 0\quad \textrm{and}\quad |Sel(L)|= |\ts(A/L)[p^\infty]|\ .
\end{equation}

\noindent For a $\L(G)$-module $M$, we denote by $M^{\vee}:=\Hom_{cont}(M,\mathbb{C}^{\ast})$ its Pontrjagin dual.
In the cases considered in this paper, $M$ will be a (mostly discrete) topological $\Z_p$-module,
so that $M^{\vee}$ can be identified with $\Hom_{cont}\!(M,\Q_p/\Z_p)$
and it has a natural structure of $\Z_p$-module.

\section{Euler characteristic}\label{leqp}
Before moving to the proof of the Euler characteristic formula we list some intermediate results
which will be useful for the computation.

\subsection{Cohomological lemmas}\label{CohoLemp}
Here we are going to collect some results on flat (local and global) cohomology groups.

\begin{lem}\label{Hflzero}
Let $L/F$ be any field extension contained in $F_S$ and let $w$ be any prime of $L$ lying over $v\in \Sigma_F$.
Then
\begin{itemize}
\item[{\bf 1.}] {$H^i_{fl}(X_L,A[p^\infty])=0$ $\forall\, i\geq 2;$}
\item[{\bf 2.}] {$H^i_{fl}(X_{L_w},A[p^\infty])=0$ $\forall\, i\geq 2.$}
\end{itemize}
\end{lem}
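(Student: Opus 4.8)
The plan is to reduce both statements to a vanishing of flat cohomology for the finite group schemes $A[p^n]$ and then pass to the limit. The key point is that over a (separable, possibly infinite) extension $L/F$ inside $F_S$, the field $L$ has characteristic $p$, so the relevant cohomological dimension bounds for the flat topology differ from the \'etale ones precisely because of the $p$-primary part, and $A[p^\infty]$ is built entirely out of $p$-primary finite flat group schemes.

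First I would treat the local statement {\bf 2}. Writing $A[p^\infty]=\dl{n}A[p^n]$ and using that flat cohomology commutes with this direct limit of finite flat group schemes, it suffices to show $H^i_{fl}(X_{L_w},A[p^n])=0$ for $i\geq 2$ and all $n$. By d\'evissage along the exact sequences $0\to A[p]\to A[p^n]\to A[p^{n-1}]\to 0$, one reduces to $n=1$, i.e. to $H^i_{fl}(X_{L_w},A[p])=0$ for $i\geq 2$. Now $L_w$ is a complete (or at least Henselian) local field of characteristic $p$, and one invokes the known bound on the flat cohomological dimension of such fields with $p$-torsion coefficients: by Milne (\emph{Arithmetic Duality Theorems}, III.6--7, or Shatz), $H^i_{fl}(\operatorname{Spec} L_w, \mathcal{G})=0$ for $i\geq 2$ when $\mathcal{G}$ is a finite flat group scheme killed by a power of $p$ over a local field of characteristic $p$. (If $L$ is infinite over $F$, one writes $L_w$ as a direct limit of completions of finite subextensions and again uses continuity of flat cohomology.) This gives {\bf 2}.

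For the global statement {\bf 1}, the strategy is to compare flat and \'etale cohomology. Since $L\subseteq F_S$ and $\cd_p(G_S(F))=1$, the \'etale (Galois) cohomology $H^i(G_S(L),A[p^n])$ vanishes for $i\geq 2$; but over the curve $X_L$ the flat cohomology of the \emph{infinitesimal} part of $A[p^n]$ can differ from the \'etale one. The cleanest route is the exact sequence relating $H^i_{fl}$ over the global field to the \'etale cohomology of $G_S(L)$ together with a sum of local contributions, or alternatively the Poitou--Tate type exact sequence for flat cohomology over the function field $L$ (again Milne, \emph{ADT}, Ch.~III): the cokernel and higher terms are controlled by the local groups $H^i_{fl}(X_{L_w},A[p^\infty])$, which vanish for $i\geq 2$ by part {\bf 2}. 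Combining the vanishing of the global \'etale $H^{\geq 2}$ (from $\cd_p=1$) with the vanishing of the local flat $H^{\geq 2}$ through that exact sequence forces $H^i_{fl}(X_L,A[p^\infty])=0$ for $i\geq 2$. As before, d\'evissage to $A[p]$ and passage to the limit over $n$ reduce everything to the case of a single finite flat group scheme.

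The main obstacle I expect is not the limiting or d\'evissage arguments, which are routine, but pinning down the correct bound for flat cohomological dimension in the global case: unlike the \'etale situation (where $\cd_p(G_S(F))=1$ is quoted directly from \cite{NSW}), the flat cohomology of $X_L$ with $p$-power coefficients over a function field of characteristic $p$ genuinely requires the function-field version of Poitou--Tate / global duality for finite flat group schemes, and one must be careful that $L$ may be an infinite extension of $F$ (so $X_L$ is not of finite type) — handled by continuity, writing $A[p^\infty]$ over $L$ as the limit of its incarnations over finite subextensions and using that flat cohomology commutes with filtered inverse limits of schemes with affine transition maps. Once that input is in place, both {\bf 1} and {\bf 2} follow formally.
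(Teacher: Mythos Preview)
Your d\'evissage to finite level is the flaw. It is \emph{not} true that $H^i_{fl}(X_{L_w},A[p^n])=0$ for $i\geq 2$: over a local field $F_v$ of characteristic $p$, local flat duality (Milne, ADT III.6.10) gives a perfect pairing $H^2_{fl}(F_v,A[p^n])\times H^0_{fl}(F_v,A^t[p^n])\to \Q/\Z$, so $H^2_{fl}(F_v,A[p^n])\simeq (A^t(F_v)[p^n])^\vee$, which is typically nonzero (e.g.\ for $A$ ordinary). Even more concretely, $H^2_{fl}(F_v,\mu_p)=\mathrm{Br}(F_v)[p]=\Z/p$. The same obstruction appears globally: $H^2_{fl}(X_F,A[p^n])$ does not vanish in general. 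Thus neither the local bound you invoke nor the reduction ``$A[p^\infty]\rightsquigarrow A[p]$'' is valid; the vanishing in the lemma is a genuine direct-limit phenomenon that cannot be detected at any finite layer.

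The paper avoids this by never passing to finite level. It applies the Hochschild--Serre spectral sequence for the pro-\'etale cover $X_{F_S}\to X_L$ (resp.\ its local analogue) directly with coefficients $A[p^\infty]$, and uses that $\cd_p(G_S(L))\leq 1$ together with \cite[Lemma 2.1.4]{NSW} to identify $H^i_{fl}(X_L,A[p^\infty])$ with a Galois cohomology group of $G_S(L)$ in degree $i\geq 2$, which then vanishes. If you want to repair your approach, the natural route is not d\'evissage but the Kummer sequence for the full abelian variety: from $H^i_{fl}(X_{L_w},A)=0$ for $i\geq 2$ (Milne, ADT III.7.8) one reads off $H^i_{fl}(X_{L_w},A[p^\infty])=0$ for $i\geq 2$ after taking the limit, because the transition maps on $H^2_{fl}(X_{L_w},A[p^n])\simeq H^1_{fl}(X_{L_w},A)/p^n$ are multiplication by $p$ and hence the direct limit vanishes. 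The analogous global input is $H^i_{fl}(X_L,A)=0$ for $i\geq 2$. Either way, the $p$-divisibility of $A[p^\infty]$ (equivalently, the presence of $A$ in the Kummer sequence) is essential and cannot be replaced by a cohomological-dimension bound for finite flat $p$-group schemes.
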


\begin{proof} {\bf 1.}
The map $X_{F_S}\to X_L$ is a Galois covering with Galois group $G_S(L)$. Then we have the Hochschild-Serre spectral sequence:
\[  E_2^{n,m}(F_S/L):=H^n(G_S(L),H^m_{fl}(X_{F_S},A[p^\infty]))\Rightarrow H^{n+m}_{fl}(X_L ,A[p^\infty]).  \]
If $L/F$ is finite, $\cd_p(G_S(L))=1$ because $L$ is still a global field (see, \cite[Theorem 10.1.12 (iv)]{NSW}).
When $L/F$ is not finite, $\cd_p(G_S(L))\leq 1$ since $G_S(L)$ is closed in $G_S(F)$. Anyway, we have that
\[ H^n(G_S(L),H^m_{fl}(X_{F_S},A[p^\infty]))=0\ \ \forall\,n>1\ \mathrm{and}\ m\in\N \, .\]
Thanks to {\cite[Lemma 2.1.4]{NSW}} we have
\begin{equation*} H^n(G_S(L),H^m_{fl}(X_{F_S},A[p^\infty]))\simeq H^{n+m}_{fl}(X_L,A[p^\infty])\ \
\forall\,n\geq 1\ \mathrm{and}\ \mathrm{every}\ m\ . \end{equation*}
In particular,
\begin{align*} H^{i}_{fl}(X_L,A[p^\infty]) & \simeq H^i(G_S(L),H^0_{fl}(X_{F_S},A[p^\infty]))\\
 & \simeq H^i(G_S(L),A(F_S)[p^\infty])=0  \ \ \forall\,i\geq 2. \end{align*}
{\bf 2.} The proof works as in {\bf 1} (recalling that the decomposition group of $v$ in $G_S(F)$ is a closed subgroup,
so it has cohomological dimension $\leq 1$ as well).
\end{proof}

\begin{lem}\label{HGzero}
Let $K/F$ be any $p$-adic Lie extension contained in $F_S$ with Galois group $G$.
For any fixed place $w$ of $K$ dividing $v\in\Sigma_F$, let $G_v$ be the corresponding decomposition group. Then,
\begin{itemize}
\item[{\bf 1.}] {$H^i(G, H^1_{fl}(X_K,A[p^\infty]))=0\ \ \forall\,i\geq1\,;$}
\item[{\bf 2.}] {$H^i(G_v,H^1_{fl}(X_{K_w},A[p^\infty]))=0\ \ \forall\,i\geq 1\,.$}
\end{itemize}
\end{lem}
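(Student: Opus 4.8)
The plan is to reduce both statements to the vanishing of higher Galois cohomology over the arithmetic profinite group $G_S$, exactly as in Lemma \ref{Hflzero}. First I would treat part \textbf{1}. The covering $X_{F_S}\to X_K$ is Galois with group $G_S(K)$, and by the spectral sequence computation already carried out in the proof of Lemma \ref{Hflzero} (using $\cd_p(G_S(K))\leq 1$ and \cite[Lemma 2.1.4]{NSW}) one has the identification
\[ H^1_{fl}(X_K,A[p^\infty])\simeq H^1(G_S(K),A(F_S)[p^\infty]). \]
Now $G_S(K)=\Gal(F_S/K)$ is a closed normal subgroup of $G_S(F)$ with quotient $G=\Gal(K/F)$, so one has the Hochschild--Serre spectral sequence for the pair $(G_S(F),G_S(K))$ with coefficients in $A(F_S)[p^\infty]$:
\[ E_2^{i,j}=H^i(G,H^j(G_S(K),A(F_S)[p^\infty]))\Rightarrow H^{i+j}(G_S(F),A(F_S)[p^\infty]). \]
Since $\cd_p(G_S(F))=1$, the abutment $H^n(G_S(F),A(F_S)[p^\infty])$ vanishes for all $n\geq 2$. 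On the $E_2$ page, the only potentially nonzero columns are $j=0$ and $j=1$ (again by $\cd_p(G_S(K))\leq 1$), so the spectral sequence degenerates into the long exact sequence
\[ \cdots\to H^n(G,A(F_S)[p^\infty])\to H^n(G_S(F),A(F_S)[p^\infty])\to H^{n-1}(G,H^1(G_S(K),A(F_S)[p^\infty]))\to H^{n+1}(G,A(F_S)[p^\infty])\to\cdots \]
For $n\geq 3$ both outer terms $H^n(G,A(F_S)[p^\infty])$ and $H^{n+1}(G,A(F_S)[p^\infty])$ need not vanish, so a direct reading does not immediately give what we want; instead I would use hypothesis \textbf{iii} of the ambient setup — that $\chi(G,A(K)[p^\infty])$ is well defined, hence that the groups $H^i(G,A(F_S)[p^\infty]^{G_S(K)})=H^i(G,A(K)[p^\infty])$ are under control — combined with the vanishing of the abutment in degrees $\geq 2$, to force $H^{n-1}(G,H^1(G_S(K),A(F_S)[p^\infty]))=H^{n-1}(G,H^1_{fl}(X_K,A[p^\infty]))$ to be zero for $n\geq 2$, i.e. $H^i(G,H^1_{fl}(X_K,A[p^\infty]))=0$ for all $i\geq 1$.

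For part \textbf{2} the argument is the local mirror: $G_v=\Gal(K_w/F_v)$ is a closed subgroup of the decomposition group of $v$ in $G_S(F)$, which has $p$-cohomological dimension $\leq 1$, and the decomposition group of $w$ in $G_S(K)$ is likewise of dimension $\leq 1$; so one repeats the same Hochschild--Serre comparison with $F_v$, $K_w$ and the local Galois groups in place of $F$, $K$, $G_S(\cdot)$, using part \textbf{2} of Lemma \ref{Hflzero} to identify $H^1_{fl}(X_{K_w},A[p^\infty])$ with the relevant $H^1$ of a local Galois group acting on $A(F_{v,S})[p^\infty]$, and concludes $H^i(G_v,H^1_{fl}(X_{K_w},A[p^\infty]))=0$ for $i\geq1$.

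The main obstacle will be the bookkeeping in the degeneration of the spectral sequence for $n$ large: because $G$ has cohomological dimension $d$ (not $\leq 1$), the terms $H^n(G,A(F_S)[p^\infty])$ with $n\geq 2$ do not vanish on their own, so one cannot simply chase the long exact sequence to the right. The clean way around this is to observe that the two-column spectral sequence yields, for every $i\geq 0$, an isomorphism $H^i(G,H^1_{fl}(X_K,A[p^\infty]))\simeq H^{i+2}(G_S(F),A(F_S)[p^\infty])$ up to the boundary contributions from $H^\bullet(G,A(F_S)[p^\infty])$, and the former vanish because $\cd_p(G_S(F))=1$ kills everything in degree $\geq2$; making this precise — i.e. checking that the edge maps from $H^\bullet(G,A(F_S)[p^\infty])$ do not interfere — is the only delicate point, and it is exactly the same manoeuvre already used implicitly in Lemma \ref{Hflzero}.
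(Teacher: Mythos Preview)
Your overall strategy is the same as the paper's: set up a Hochschild--Serre spectral sequence whose $E_2$-page has only two nonzero rows (by Lemma~\ref{Hflzero}) and whose abutment vanishes in degrees $\geq 2$. The paper does this directly for the flat covering $X_K\to X_F$; you first translate to Galois cohomology of $G_S(K)\lhd G_S(F)$ acting on $A(F_S)[p^\infty]$. These are the same spectral sequence under the identifications coming from Lemma~\ref{Hflzero}, so the approach is not genuinely different.

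The gap is in your handling of the degeneration. Your proposed fix via ``hypothesis~{\it iii)} of the ambient setup'' is not available: Lemma~\ref{HGzero} is stated and used with no hypothesis on $\chi(G,A(K)[p^\infty])$, and in any case ``$\chi$ well defined'' means the groups are \emph{finite}, not zero --- that would not force any vanishing. Your final paragraph then conflates two different objects: the long exact sequence of a two-row spectral sequence (with $H^n=0$ for $n\geq 2$) gives
\[
E_2^{i,1}\ \simeq\ E_2^{i+2,0}\ =\ H^{i+2}\bigl(G,\,A(K)[p^\infty]\bigr)\qquad (i\geq 1),
\]
via the $d_2$-differential, \emph{not} an isomorphism with the abutment $H^{i+2}\bigl(G_S(F),A(F_S)[p^\infty]\bigr)$. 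The ``boundary contributions from $H^\bullet(G,A(F_S)[p^\infty])$'' that you hope do not interfere are exactly $E_2^{i+2,0}$, and the spectral sequence forces them to be \emph{isomorphic} to $E_2^{i,1}$, not independent of it. So the argument as you have written it does not close.

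The paper bypasses this by invoking \cite[Lemma~2.1.4]{NSW} to read off directly $H^n(G,H^1_{fl}(X_K,A[p^\infty]))\simeq H^{n+1}_{fl}(X_F,A[p^\infty])$, which is zero for $n\geq 1$ by Lemma~\ref{Hflzero}. You should consult that reference rather than trying to reconstruct the conclusion from the raw long exact sequence.
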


\begin{proof} {\bf 1.} The Galois covering $X_K \to X_F$ with Galois group $G$ gives us the Hochschild-Serre
spectral sequence
\[ E_2^{n,m}(K/F):=H^n(G,H^m_{fl}(X_{K},A[p^\infty]))\Rightarrow H^{n+m}_{fl}(X_F ,A[p^\infty])\]
with $H^n(G, H^m_{fl}(X_K, A[p^\infty]))=0\ \ \forall\ n\ \mathrm{and}\  m>1$ (by the previous lemma).
By \cite[Lemma 2.1.4]{NSW} we have that
\[ H^n(G, H^m_{fl}(X_K, A[p^\infty]))\simeq H^{n+m}_{fl}(X_F,A[p^\infty])\ \ \forall\,n\ \mathrm{and}\ \forall\,m\geq 1\,. \]
Thanks to the previous lemma
\[ H^i(G, H^1_{fl}(X_K, A[p^\infty]))\simeq H^{i+1}_{fl}(X_F,A[p^\infty])=0 \ \ \forall\,i\geq 1\,. \]
{\bf 2.} The argument is the same of part {\bf 1}.
\end{proof}

\begin{prop}\label{HiSel0}
For any $i\geqslant 2$ we have
\[ H^i(G, Sel(K)) \simeq H^{i-1}(G,Im(\psi_K)) \simeq \displaystyle{\prod_{v\in \Sigma_F} H^{i-1}(G_v, Im(\psi_{K,v}))} \]
(where $\psi_K$ is defined in \eqref{EqDefSel} and $Im(\psi_{K,v})$ is the image of $\psi_K$ in $H^1_{fl}(X_{K_w},A)[p^\infty]$
for some fixed $w$ lying above $v$).
Moreover, if $i\geqslant 3$, then
\[ H^i(G, Sel(K)) \simeq H^{i-1}(G,Im(\psi_K)) \simeq \displaystyle{\prod_{\begin{subarray}{c} v\in S \\v\ ram. \end{subarray}}
H^{i-1}(G_v, Im(\psi_{K,v}))}\,.\]
\end{prop}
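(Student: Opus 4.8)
The plan is to push the computation through the short exact sequence of $G$-modules that defines the Selmer group and then to split the resulting local term up place by place using Shapiro's lemma. From \eqref{EqDefSel} we have the exact sequence of $G$-modules
\[ 0 \longrightarrow Sel(K) \longrightarrow H^1_{fl}(X_K,A[p^\infty]) \xrightarrow{\ \psi_K\ } Im(\psi_K) \longrightarrow 0 . \]
Passing to the long exact sequence in $G$-cohomology and invoking Lemma \ref{HGzero}, which says that $H^j(G,H^1_{fl}(X_K,A[p^\infty]))=0$ for every $j\geq 1$, the terms coming from $H^1_{fl}(X_K,A[p^\infty])$ all vanish and the boundary maps become isomorphisms
\[ H^{i-1}(G,Im(\psi_K)) \xrightarrow{\ \sim\ } H^i(G,Sel(K)) \qquad\text{for every }i\geq 2 \]
(for each such $i$ the vanishing is applied in degrees $i-1$ and $i$, both $\geq 1$). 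This is the first isomorphism in both displayed formulas, so everything reduces to computing $H^{i-1}(G,Im(\psi_K))$.

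For the second isomorphism I would write $\psi_K=\prod_{v\in\Sigma_F}\lambda_v$, where the $v$-component
\[ \lambda_v\colon H^1_{fl}(X_K,A[p^\infty])\longrightarrow \Coind^{G_v}_G H^1_{fl}(X_{K_w},A)[p^\infty] \]
is assembled from the localization maps at the places of $K$ above $v$ followed by the Kummer quotient. By construction $\lambda_v$ takes values in the $G$-submodule $\Coind^{G_v}_G Im(\psi_{K,v})$, so $Im(\psi_K)\subseteq\prod_{v}\Coind^{G_v}_G Im(\psi_{K,v})$; the substance of the step is the reverse inclusion, i.e. the identification of $G$-modules
\[ Im(\psi_K)=\prod_{v\in\Sigma_F}\Coind^{G_v}_G Im(\psi_{K,v}) , \]
which says that any family of local cohomology classes each of which individually comes from a global class in fact comes, all at once, from a single global class. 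Granting this, Shapiro's lemma $H^n(G,\Coind^{G_v}_G N)\simeq H^n(G_v,N)$, together with the fact that group cohomology commutes with products, gives
\[ H^{i-1}(G,Im(\psi_K))\simeq\prod_{v\in\Sigma_F} H^{i-1}(G_v,Im(\psi_{K,v})) \qquad (i\geq 2) , \]
which is the second isomorphism of the first formula. (For $v\notin S$ the abelian variety $A$ has good reduction at $v$ and $v$ is unramified in $K/F$, so a global class localizes to an unramified class, which lies in the image of the Kummer map; hence $Im(\psi_{K,v})=0$ and the product is supported on $S$.)

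To deduce the $i\geq 3$ statement I would note that for $v$ unramified in $K/F$ — in particular for every $v\notin S$ — the group $G_v=\Gal(K_w/F_v)$ is a quotient of the Galois group of the maximal unramified extension of $F_v$; being procyclic and without elements of order $p$, it satisfies $\cd_p(G_v)\leq 1$. Since $Im(\psi_{K,v})$ is a $p$-primary torsion $G_v$-module, this forces $H^{j}(G_v,Im(\psi_{K,v}))=0$ for all $j\geq 2$. Hence, when $i\geq 3$ (so that $i-1\geq 2$), only the places $v\in S$ ramified in $K/F$ contribute to the product, which is exactly the second formula.

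The hard part will be the identification $Im(\psi_K)=\prod_v\Coind^{G_v}_G Im(\psi_{K,v})$, that is, the simultaneous realizability of locally admissible data by a single global class. First I would reduce, via the direct-limit description $Sel(K)=\varinjlim_{K'}Sel(K')$, to finite subextensions $K'/F$, where each $\Coind^{G'_v}_{G'}$ is an honest finite product over the places of $K'$ above $v$; there the claim becomes the vanishing of the cokernel of $H^1_{fl}(X_{K'},A[p^\infty])\to\bigoplus_{v\in S}\bigoplus_{w'\mid v}Im(\psi_{K',w'})$. By the Cassels–Poitou–Tate exact sequence for the pair $(A,A^t)$ this cokernel is controlled by $Sel_{A^t}(K')_p^\vee$, and the vanishing $H^2_{fl}(X_{K'},A[p^\infty])=0$ from Lemma \ref{Hflzero}, together with the explicit local pairings of Poitou–Tate duality for abelian varieties over function fields, is what should make it manageable. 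I expect this verification, along with the care needed to pass back to the infinite level $K$ (where the set of places of $K$ above a fixed $v\in S$ may be infinite), to be the most delicate point of the proof.
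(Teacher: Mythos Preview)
Your argument is essentially identical to the paper's: take the defining short exact sequence for $Sel(K)$, apply the long exact sequence in $G$-cohomology, kill the middle terms with Lemma~\ref{HGzero}, and then use Shapiro's Lemma and $\cd_p(G_v)\leq 1$ for unramified $v$ to handle the local factors. The paper writes down exactly the same sequence and the same reductions.

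The one place where you diverge is your treatment of the identification
\[
Im(\psi_K)=\prod_{v\in\Sigma_F}\Coind^{G_v}_G Im(\psi_{K,v}).
\]
In the paper this is simply asserted (``the last isomorphism follows from $Im(\psi_K)=\prod_v\Coind^{G_v}_G Im(\psi_{K,v})$ and Shapiro's Lemma''); no further justification is given, and the authors treat it as part of the structural setup rather than as a theorem to be proved. You are right that the image of a map into a product is not automatically the product of the componentwise images, so your caution is mathematically reasonable. However, the elaborate programme you sketch---reducing to finite subextensions, invoking the Cassels--Poitou--Tate sequence for $(A,A^t)$, and controlling cokernels via $Sel_{A^t}(K')_p^\vee$---is not what the paper does and goes well beyond what is needed or claimed here. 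If you want to match the paper, you should simply record the decomposition as given and move on; if you want to be more careful than the paper, a cleaner route than full Poitou--Tate would be to note that for the purposes of the proposition one only needs the isomorphism in $G$-cohomology in degrees $\geq 1$, not a literal module identification.
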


\begin{proof}
Consider the sequence
\[ Sel(K) \iri H^1_{fl}(X_K,A[p^\infty]) \stackrel{\psi_K}{-\!\!\!-\!\!\!-\!\!\!-\!\!\!\sri}
Im(\psi_K)=\prod_{v\in \Sigma_F}\Coind^{G_v}_G Im(\psi_{K,v})\,. \]
Taking its cohomology with respect to $G$ we obtain
\[ \begin{xy}
(-45,0)*+{Sel(K)^G}="v1";(0,0)*+{H^1_{fl}(X_K,A[p^\infty])^G}="v2";(55,0)*+
{Im(\psi_K)^G\to}="v3";
(-45,-6)*+{\ldots}; (0,-6)*+{\ldots};  (50,-6)*+{\ldots};
(-45,-13)*+{H^i(G,Sel(K))}="v4";(0,-13)*+{H^i(G,H^1_{fl}(X_K,A[p^\infty]))}="v5";(55,-13)*+
{H^i(G,Im(\psi_K))\to}="v6";
(-45,-19)*+{\ldots}; (0,-19)*+{\ldots};  (50,-19)*+{\ldots};
(-45,-26)*+{H^d(G,Sel(K))}="v7";(0,-26)*+{H^d(G,H^1_{fl}(X_K,A[p^\infty]))}="v8";(55,-26)*+
{H^d(G,Im(\psi_K))}="v9";
{\ar@{^{(}->} "v1";"v2"};{\ar@{->} "v2";"v3"};{\ar@{->} "v4";"v5"};{\ar@{->} "v5";"v6"};
{\ar@{->} "v7";"v8"};{\ar@{->>} "v8";"v9"};
\end{xy}\]
(recall that, by assumption, $G$ has no elements of order $p$ and that it has finite cohomological
dimension $d$).\\
Thanks to Lemmas \ref{Hflzero} and \ref{HGzero} part {\bf 1}, the above sequence provides another sequence
\begin{equation}\label{EqHiSel}
Sel(K)^G \iri H^1_{fl}(X_K,A[p^\infty])^G \rightarrow Im(\psi_K)^G \sri H^1(G,Sel(K))
\end{equation}
and isomorphisms
\[ H^i(G, Sel(K))\simeq H^{i-1}(G,Im(\psi_K)) \simeq \displaystyle{\prod_{v\in \Sigma_F} H^{i-1}(G_v, Im(\psi_{K,v}))}
\quad \forall\ i\geqslant 2 \]
(the last isomorpshism follows from $Im(\psi_K)=\displaystyle{\prod_{v\in \Sigma_F}}\Coind^{G_v}_G Im(\psi_{K,v})$ and Shapiro's Lemma).\\
When $v$ is unramified, we have that $G_v$ is 0 or $\Z_p$, hence of cohomological dimension $\leqslant 1$. Therefore
for all those primes
\[ H^{i-1}(G_v, Im(\psi_{K,v}))=0 \quad \forall\ i\geqslant 3 \,.\]
\end{proof}

\begin{cor}\label{SurjpsiK}
If $\psi_K$ is surjective, then for any $i\geqslant 2$ we have
\[ \begin{array}{ll} H^i(G, Sel(K)) & \simeq
\displaystyle{\prod_{v\in \Sigma_F} H^{i-1}(G_v, H^1_{fl}(X_{K_w}A)[p^\infty])} \\
\ & \simeq
\displaystyle{\prod_{\begin{subarray}{c} v\in S \\v\ ram. \end{subarray}} H^i(G_v,A(K_w)\otimes \Q_p/\Z_p)\,.}\end{array} \]
Moreover (whenever all terms are defined)
\[ \chi(G,Sel(K))=\frac{|H^1_{fl}(X_K,A[p^\infty])^G|}{\displaystyle{\prod_{v\in\Sigma_F}}|H^1_{fl}(X_{K_w},A[p^\infty])^{G_v}|}\cdot
\prod_{v\in \Sigma_F} \chi(G_v,A(K_w)\otimes \Q_p/\Z_p) \ .\]
\end{cor}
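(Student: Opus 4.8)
The plan is to use the surjectivity of $\psi_K$ to promote the defining sequence of $Sel(K)$ to a genuine short exact sequence, and then feed it into the results already established. Since $\psi_K$ is surjective we have $Im(\psi_{K,v})=H^1_{fl}(X_{K_w},A)[p^\infty]$ for every $v$, so Proposition \ref{HiSel0} at once gives
\[ H^i(G,Sel(K))\simeq \prod_{v\in\Sigma_F} H^{i-1}(G_v,H^1_{fl}(X_{K_w},A)[p^\infty]) \qquad\forall\, i\geq 2\,. \]
To rewrite the right-hand side I would take the local Kummer sequence
\[ A(K_w)\otimes\Q_p/\Z_p \iri H^1_{fl}(X_{K_w},A[p^\infty]) \sri H^1_{fl}(X_{K_w},A)[p^\infty] \]
and pass to its $G_v$-cohomology long exact sequence: by Lemma \ref{HGzero} part {\bf 2} the middle term has no higher $G_v$-cohomology, which yields the connecting isomorphisms $H^{i-1}(G_v,H^1_{fl}(X_{K_w},A)[p^\infty])\simeq H^i(G_v,A(K_w)\otimes\Q_p/\Z_p)$ for all $i\geq 2$. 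Finally, for $v$ unramified the decomposition group $G_v$ is trivial or $\simeq\Z_p$, hence $\cd_p(G_v)\leq 1$, so $H^i(G_v,A(K_w)\otimes\Q_p/\Z_p)=0$ for $i\geq 2$ and the product collapses to the ramified places of $S$, giving the two displayed isomorphisms.

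For the Euler characteristic formula I would start from the short exact sequence $Sel(K)\iri H^1_{fl}(X_K,A[p^\infty])\sri Im(\psi_K)$ and use multiplicativity of $\chi$ (valid whenever all three Euler characteristics are defined) to get $\chi(G,Sel(K))=\chi(G,H^1_{fl}(X_K,A[p^\infty]))\cdot\chi(G,Im(\psi_K))^{-1}$. By Lemma \ref{HGzero} part {\bf 1} the first factor is just $|H^1_{fl}(X_K,A[p^\infty])^G|$, and Shapiro's Lemma turns $\chi(G,Im(\psi_K))$ into $\prod_{v\in\Sigma_F}\chi(G_v,H^1_{fl}(X_{K_w},A)[p^\infty])$. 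Applying $\chi(G_v,-)$ to the local Kummer sequence above and using Lemma \ref{HGzero} part {\bf 2} once more to identify $\chi(G_v,H^1_{fl}(X_{K_w},A[p^\infty]))$ with $|H^1_{fl}(X_{K_w},A[p^\infty])^{G_v}|$, I obtain $\chi(G_v,H^1_{fl}(X_{K_w},A)[p^\infty])=|H^1_{fl}(X_{K_w},A[p^\infty])^{G_v}|\cdot\chi(G_v,A(K_w)\otimes\Q_p/\Z_p)^{-1}$. Substituting this into the product gives exactly the claimed expression for $\chi(G,Sel(K))$.

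The hard part will be purely bookkeeping: making sure that the a priori infinite products — both over $H^{i-1}(G_v,\cdot)$ and over $\chi(G_v,\cdot)$ — are actually defined and have almost all factors trivial, which is precisely what the clause "whenever all terms are defined" is meant to absorb; in practice for $v\notin S$ the local cohomology is controlled by $\cd_p(G_v)\leq 1$. A secondary point requiring care is the consistent use of the degree shift by one between the $H^1_{fl}(X_{K_w},A)[p^\infty]$-version and the $A(K_w)\otimes\Q_p/\Z_p$-version, which enters both the cohomology isomorphisms and the local Euler characteristic computation.
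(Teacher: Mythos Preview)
Your argument for the two isomorphisms is exactly the paper's: invoke Proposition \ref{HiSel0} with $Im(\psi_{K,v})$ replaced by the full local group, then use the $G_v$-cohomology of the local Kummer sequence together with Lemma \ref{HGzero} part {\bf 2} to shift degrees, and discard the unramified primes via $\cd_p(G_v)\leq 1$.

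For the Euler characteristic the paper organizes the computation a bit differently. Rather than appealing to multiplicativity of $\chi$ on the global and local short exact sequences, it first reads off from the isomorphisms that
\[ \chi^{(2)}(G,Sel(K))=\prod_{\begin{subarray}{c} v\in S \\ v\ ram.\end{subarray}}\chi^{(2)}(G_v,A(K_w)\otimes\Q_p/\Z_p)\,, \]
then uses the four-term sequence \eqref{EqHiSel} to write $\chi/\chi^{(2)}$ as $|H^1_{fl}(X_K,A[p^\infty])^G|$ divided by $\prod_v |H^0(G_v,H^1_{fl}(X_{K_w},A)[p^\infty])|$, and finally expresses each $|H^0(G_v,\cdot)|$ via the low-degree portion of the local Kummer cohomology sequence. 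The content is identical to yours; your use of multiplicativity of $\chi$ simply packages these same cancellations in one step, whereas the paper's layout keeps the degree-$0,1$ contributions visibly separate from the higher ones. Either way the infinite-product issue is handled by the same observation you make, that for $v\notin S$ the local factors are trivial.
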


\begin{proof}
For the first isomorphism just substitute $Im(\psi_K)$ with
$\displaystyle{\prod_{v\in\Sigma_F}}\Coind^{G_v}_G H^1_{fl} (X_{K_w},A)[p^\infty]$ in the previous proposition.
For the second one, use the cohomology sequence of
\begin{equation}\label{ExSeqAKw}
 A(K_w)\otimes \Q_p/\Z_p \iri H^1_{fl}(X_{K_w},A[p^\infty]) \sri H^1_{fl}(X_{K_w},A)[p^\infty]
 \end{equation}
together with Lemma \ref{HGzero} part {\bf 2}. The unramified places are eliminated as in the previous proposition
(but note that this now holds for $i=2$ as well).
Therefore
\[ \chi^{(2)}(G,Sel(K)) = \prod_{\begin{subarray}{c} v\in S \\v\ ram. \end{subarray}}\chi^{(2)}(G_v,A(K_w)\otimes \Q_p/\Z_p) \]
and equation \eqref{EqHiSel} shows that
\[ \frac{\chi(G,Sel(K))}{\chi^{(2)}(G,Sel(K))}=
\frac{|H^1_{fl}(X_K,A[p^\infty])^G|}{\displaystyle{\prod_{v\in\Sigma_F}} |H^0(G_v, H^1_{fl}(X_{K_w},A)[p^\infty])|}\,.\]
From the cohomology sequence of \eqref{ExSeqAKw} and Lemma \ref{HGzero} part {\bf 2} one gets
\[ \begin{array}{ll} |H^0(G_v, H^1_{fl}(X_{K_w},A)[p^\infty])| & =\displaystyle{
\frac{|H^1_{fl}(X_{K_w},A[p^\infty])^{G_v}||H^1(G_v,A(K_w)\otimes \Q_p/\Z_p)|}{|A(F_v)\otimes \Q_p/\Z_p|}} \\
\ & = |H^1_{fl}(X_{K_w},A[p^\infty])^{G_v}|\cdot \displaystyle{
\frac{\chi^{(2)}(G_v,A(K_w)\otimes \Q_p/\Z_p)}{\chi(G_v,A(K_w)\otimes \Q_p/\Z_p)}\,. }
\end{array} \]
Putting everything together (and observing that for unramified primes $\chi^{(2)}(G_v,A(K_w)\otimes \Q_p/\Z_p)=1$)
we get the final formula.
\end{proof}

\subsection{Descent diagrams}\label{DescDiag}
Consider the sequence
\[ Sel(K) \iri H^1_{fl}(X_K,A[p^\infty]) \stackrel{\psi_K}{-\!\!\!-\!\!\!-\!\!\!-\!\!\!\sri} Im(\psi_K) \]
and let $\psi_K^G : H^1_{fl}(X_K,A[p^\infty])^G \rightarrow Im(\psi_K)^G$ be the induced map in cohomology.
Then equation \eqref{EqHiSel} shows that
\[ H^1(G,Sel(K))\simeq Coker(\psi_K^G) \,.\]
We consider the classical descent diagram, which has been already used to study the structure of Selmer groups
as modules over some Iwasawa algebra (see, e.g., \cite{BV1} and the references there)
\begin{equation}\label{Diag1p}
\xymatrix{ Sel(F) \ar[d]^\alpha \ar@{^(->}[r] & H^1_{fl}(X_F,A[p^\infty]) \ar@{->>}[r]^{\qquad\psi_F} \ar[d]^\beta &
Im(\psi_F) \ar[d]^{\beta'} \\
Sel(K)^G \ar@{^(->}[r] & H^1_{fl}(X_K,A[p^\infty])^G \ar@{->>}[r]^{\qquad\psi_K^G} &
Im(\psi_K^G)  \ ,}
\end{equation}
and also
\begin{equation}\label{Diag2p}
\xymatrix { Im(\psi_F) \ar[d]^{\beta'} \ar@{^(->}[r] &
\displaystyle{\prod_{v\in \Sigma_F} H^1_{fl}(X_{F_v},A)[p^\infty]} \ar@{->>}[r] \ar[d]^\eta &
Coker(\psi_F) \ar[d]^{\eta'} \\
Im(\psi_K^G) \ar@{^(->}[r] & Im(\psi_K)^G \ar@{->>}[r] & H^1(G,Sel(K))  \ .}
\end{equation}
Moreover we have a natural inclusion $ Im(\psi_K)^G \iri \displaystyle{\prod_{v\in \Sigma_F} \Coind^{G_v}_G H^1_{fl}(X_{K_w},A)[p^\infty]^G }$
whose cokernel we denote by $NS(\psi_K)$. Note that if $\psi_K$ is surjective, then $NS(\psi_K)=0$ and, for lack of a better
description, we could say that $NS(\psi_K)$ measures the defect of surjectivity of $\psi_K$.
The commutative diagram
\begin{equation}\label{Diag3p}
\xymatrix { \displaystyle{\prod_{v\in \Sigma_F} H^1_{fl}(X_{F_v},A)[p^\infty] } \ar@{=}[r] \ar[d]^{\eta}&
\displaystyle{\prod_{v\in \Sigma_F} H^1_{fl}(X_{F_v},A)[p^\infty] \ar[d]^{\gamma}} \\
Im(\psi_K)^G \ar@{^(->}[r] & \displaystyle{\prod_{v\in \Sigma_F} \Coind^{G_v}_G H^1_{fl}(X_{K_w},A)[p^\infty]^G } \ar@{->>}[r] &
NS(\psi_K) }
\end{equation}
shows that
\begin{equation} \label{EqDiag3p}
Ker(\gamma)\simeq Ker(\eta)\quad{\rm and}\quad NS(\psi_K)\simeq Coker(\gamma)/Coker(\eta)\,.
\end{equation}
As already done in \cite{V} we can derive a formula for $\chi(G,Sel(K))$ using the
cardinalities of kernels and cokernels appearing in diagrams \eqref{Diag1p}, \eqref{Diag2p} and \eqref{Diag3p} (since our
goal is a formula for the Euler characteristic we assume that it is well defined, i.e., all the relevant modules are finite).

\noindent We first provide a general formula and then give more precise information on each factor.

\noindent From the right vertical sequence of \eqref{Diag2p}, the snake lemma sequences of diagrams \eqref{Diag1p} and \eqref{Diag2p},
and equation \eqref{EqDiag3p} one has
\[ \begin{array}{lcl} |H^1(G,Sel(K))|& = & \displaystyle{|Coker(\psi_F)|\cdot \frac{|Coker(\eta')|}{|Ker(\eta')|}} \\
\ & = & \displaystyle{|Coker(\psi_F)|\cdot \frac{|Ker(\beta')|}{|Coker(\beta')|}\cdot\frac{|Coker(\eta)|}{|Ker(\eta)|}} \\
\ & = & \displaystyle{\frac{|Coker(\psi_F)|}{|NS(\psi_K)|}\cdot
\frac{|Coker(\gamma)|}{|Ker(\gamma)|}\cdot\frac{|Ker(\beta)|}{|Coker(\beta)|}\cdot\frac{|Coker(\alpha)|}{|Ker(\alpha)|}}\ .\end{array} \]

\begin{lem}\label{H^0/H^1}
One has
\[ \frac{\chi(G,Sel(K))}{\chi^{(2)}(G,Sel(K))} =
\frac{|Sel(F)||NS(\psi_K)|}{|Coker(\psi_F)|}\cdot\frac{|Coker(\beta)|}{|Ker(\beta)|}\cdot\frac{|Ker(\gamma)|}{|Coker(\gamma)|} \]
(assuming that all terms are finite).
\end{lem}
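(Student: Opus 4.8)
The plan is to combine the definition of the truncated Euler characteristic with the cardinality computation for $|H^1(G,Sel(K))|$ already carried out right before the statement, and with the analysis of $H^i(G,Sel(K))$ for $i\geq 2$ provided by Proposition~\ref{HiSel0}. By definition,
\[
\frac{\chi(G,Sel(K))}{\chi^{(2)}(G,Sel(K))} = |H^0(G,Sel(K))|^{1}\cdot |H^1(G,Sel(K))|^{-1} = \frac{|Sel(K)^G|}{|H^1(G,Sel(K))|}\,.
\]
So the first step is to identify the numerator $|Sel(K)^G|$ and the denominator $|H^1(G,Sel(K))|$ in terms of the quantities appearing in diagrams \eqref{Diag1p}, \eqref{Diag2p}, \eqref{Diag3p}.

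For the numerator, I would use the snake lemma applied to the left square of diagram \eqref{Diag1p}: since $\alpha\colon Sel(F)\to Sel(K)^G$ has kernel $Ker(\alpha)$ and cokernel $Coker(\alpha)$, one has $|Sel(K)^G| = |Sel(F)|\cdot\frac{|Coker(\alpha)|}{|Ker(\alpha)|}$. For the denominator, I would simply quote the displayed three-line computation immediately preceding the lemma, which already establishes
\[
|H^1(G,Sel(K))| = \frac{|Coker(\psi_F)|}{|NS(\psi_K)|}\cdot\frac{|Coker(\gamma)|}{|Ker(\gamma)|}\cdot\frac{|Ker(\beta)|}{|Coker(\beta)|}\cdot\frac{|Coker(\alpha)|}{|Ker(\alpha)|}\,.
\]
Dividing, the factors $\frac{|Coker(\alpha)|}{|Ker(\alpha)|}$ cancel, yielding exactly
\[
\frac{|Sel(K)^G|}{|H^1(G,Sel(K))|} = \frac{|Sel(F)|\,|NS(\psi_K)|}{|Coker(\psi_F)|}\cdot\frac{|Coker(\beta)|}{|Ker(\beta)|}\cdot\frac{|Ker(\gamma)|}{|Coker(\gamma)|}\,,
\]
which is the claimed identity.

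The only genuine subtlety — and the step I expect to be the main obstacle — is justifying the finiteness of all terms so that the rearrangement of cardinalities is legitimate: one must know that $Ker(\alpha)$, $Coker(\alpha)$, $Ker(\beta)$, $Coker(\beta)$, $Ker(\gamma)$, $Coker(\gamma)$, $Coker(\psi_F)$ and $NS(\psi_K)$ are all finite, which is exactly the running hypothesis ("all the relevant modules are finite") under which the general formula for $|H^1(G,Sel(K))|$ was derived. I would state this hypothesis explicitly and note that under it the equality $\chi/\chi^{(2)} = |Sel(K)^G|/|H^1(G,Sel(K))|$ is immediate from Definition~\ref{EuCharDef}. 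A secondary point worth a sentence is that $|Sel(K)^G| = |H^0(G,Sel(K))|$ by definition of group cohomology in degree $0$, so no separate argument is needed there.

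\begin{proof}
By Definition \ref{EuCharDef} we have
\[
\frac{\chi(G,Sel(K))}{\chi^{(2)}(G,Sel(K))} = |H^0(G,Sel(K))|\cdot|H^1(G,Sel(K))|^{-1}
= \frac{|Sel(K)^G|}{|H^1(G,Sel(K))|}\,,
\]
all terms being finite by our standing assumption that the Euler characteristic is well defined. The snake lemma applied to the left-hand square of \eqref{Diag1p} gives
\[
|Sel(K)^G| = |Sel(F)|\cdot\frac{|Coker(\alpha)|}{|Ker(\alpha)|}\,.
\]
On the other hand, the computation carried out just before the statement shows
\[
|H^1(G,Sel(K))| = \frac{|Coker(\psi_F)|}{|NS(\psi_K)|}\cdot
\frac{|Coker(\gamma)|}{|Ker(\gamma)|}\cdot\frac{|Ker(\beta)|}{|Coker(\beta)|}\cdot\frac{|Coker(\alpha)|}{|Ker(\alpha)|}\,.
\]
Dividing, the factor $\frac{|Coker(\alpha)|}{|Ker(\alpha)|}$ cancels and we obtain
\[
\frac{\chi(G,Sel(K))}{\chi^{(2)}(G,Sel(K))} =
\frac{|Sel(F)|\,|NS(\psi_K)|}{|Coker(\psi_F)|}\cdot\frac{|Coker(\beta)|}{|Ker(\beta)|}\cdot\frac{|Ker(\gamma)|}{|Coker(\gamma)|}\,,
\]
as claimed.
\end{proof}
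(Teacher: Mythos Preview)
Your proof is correct and follows essentially the same approach as the paper: both use the displayed computation of $|H^1(G,Sel(K))|$ preceding the lemma together with the elementary identity $\dfrac{|Coker(\alpha)|}{|Ker(\alpha)|}=\dfrac{|Sel(K)^G|}{|Sel(F)|}$ coming from the map $\alpha$, and then divide. The only cosmetic difference is that you phrase the latter identity via the snake lemma whereas the paper just reads it off the vertical map $\alpha$ in diagram~\eqref{Diag1p}.
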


\begin{proof} The formula follows from the
previous equation and the fact that (by the right vertical sequence of diagram \eqref{Diag1p})
\[ \frac{|Coker(\alpha)|}{|Ker(\alpha)|} = \frac{|Sel(K)^G|}{|Sel(F)|} \ .\qedhere \]
\end{proof}

\begin{lem}\label{LocHochSer}
In diagram \eqref{Diag1p} one has
\[ Ker(\beta) = H^1(G,A(K)[p^\infty]) \quad and \quad
Coker(\beta) = H^2(G,A(K)[p^\infty]) \ .\]
In diagram \eqref{Diag3p} one has
\begin{equation}\label{EqKerGamma}
Ker(\gamma)\simeq \prod_{v\in \Sigma_F} H^1(G_v,A(K_w))[p^\infty] \quad and \quad
Coker(\gamma)\simeq \prod_{v\in \Sigma_F} H^2(G_v,A(K_w))[p^\infty] \ ,\end{equation}
where $w$ is a fixed place of $K$ dividing $v$. Moreover, $H^1(G_v,A(K_w))=0$ for any $v\not\in S$,
while for any unramified place $v$ we have
$H^2(G_v,A(K_w))[p^\infty]=0$.
\end{lem}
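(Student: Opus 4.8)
The plan is to identify $\beta$ and $\gamma$ as the natural restriction maps in Galois cohomology and read off their kernels and cokernels from appropriate Hochschild--Serre spectral sequences, exactly as was done for the global and local flat cohomology groups in Lemmas \ref{Hflzero} and \ref{HGzero}. Concretely, $\beta$ is the restriction map $H^1_{fl}(X_F,A[p^\infty]) \to H^1_{fl}(X_K,A[p^\infty])^G$; its kernel and cokernel are governed by the low-degree exact sequence of the Hochschild--Serre spectral sequence $E_2^{n,m}(K/F) = H^n(G,H^m_{fl}(X_K,A[p^\infty])) \Rightarrow H^{n+m}_{fl}(X_F,A[p^\infty])$ (the spectral sequence already invoked in the proof of Lemma \ref{HGzero}). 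The five-term exact sequence gives $0 \to H^1(G,H^0_{fl}(X_K,A[p^\infty])) \to H^1_{fl}(X_F,A[p^\infty]) \to H^1_{fl}(X_K,A[p^\infty])^G \to H^2(G,H^0_{fl}(X_K,A[p^\infty])) \to \ker\bigl(H^2_{fl}(X_F,A[p^\infty]) \to H^2_{fl}(X_K,A[p^\infty])^G\bigr)$, and since $H^0_{fl}(X_K,A[p^\infty]) = A(K)[p^\infty]$ and, by Lemma \ref{Hflzero}, $H^2_{fl}(X_F,A[p^\infty]) = 0$, this collapses precisely to $\ker(\beta) = H^1(G,A(K)[p^\infty])$ and $\mathrm{Coker}(\beta) = H^2(G,A(K)[p^\infty])$.

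For $\gamma$, the plan is the same one place at a time. By Shapiro's lemma, the $v$-component of the product target of $\gamma$ is $H^1_{fl}(X_{K_w},A)[p^\infty]^{G_v}$, so $\gamma$ is the product over $v \in \Sigma_F$ of the local restriction maps $H^1_{fl}(X_{F_v},A)[p^\infty] \to H^1_{fl}(X_{K_w},A)[p^\infty]^{G_v}$. First I would rewrite these groups: from the Kummer sequence $H^1_{fl}(X_{L_w},A)[p^\infty] \simeq H^1_{fl}(X_{L_w},A[p^\infty])/\mathrm{Im}(\kappa_w)$, but more usefully one has the classical identification $H^1_{fl}(X_{L_w},A)[p^\infty] \simeq H^1(G_S(L_w),A)[p^\infty]$ (flat and Galois cohomology agree here since we are looking at the abelian variety $A$ itself, not its finite torsion subschemes), and then the long exact sequence of $\ref{ExSeqAKw}$ shows the relevant object is $A(K_w) \otimes \Q_p/\Z_p$ sitting inside $H^1_{fl}(X_{K_w},A[p^\infty])$. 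Running the Hochschild--Serre spectral sequence for $K_w/F_v$ with Galois group $G_v$ on the module $A(K_w)$ and using that $H^1_{fl}(X_{F_v},A)[p^\infty] = H^1(G_v,A(K_w))[p^\infty]$-free-resolution is controlled by $\mathrm{cd}_p(G_v) < \infty$, the five-term sequence yields $\ker(\gamma) \simeq \prod_v H^1(G_v,A(K_w))[p^\infty]$ and $\mathrm{Coker}(\gamma) \simeq \prod_v H^2(G_v,A(K_w))[p^\infty]$. The vanishing $H^2_{fl}(X_{F_v},A[p^\infty]) = 0$ from Lemma \ref{Hflzero} part \textbf{2} is what makes the cokernel term collapse to precisely $H^2$ and not a subgroup thereof.

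Finally, for the last two assertions: $H^1(G_v,A(K_w)) = 0$ for $v \notin S$ because such $v$ is unramified in $K/F$, so $G_v$ is either trivial or $\simeq \Z_p$ acting on the points of a good-reduction abelian variety over the local field, and local Tate-type duality (or a direct computation with the reduction, as $A$ has good reduction and $K_w/F_v$ is unramified) forces $H^1(G_v,A(K_w)) = 0$; and for any unramified $v$ (in $S$ or not) we have $\mathrm{cd}_p(G_v) \leq 1$ since $G_v \in \{0,\Z_p\}$, whence $H^2(G_v,-) = 0$ identically, in particular $H^2(G_v,A(K_w))[p^\infty] = 0$. The main obstacle I expect is purely bookkeeping: making sure the local groups are written in a form where the spectral sequence genuinely has $H^0 = A(K_w)$ (as opposed to $A(K_w)\otimes\Q_p/\Z_p$ or $H^1_{fl}(X_{K_w},A)[p^\infty]$), i.e. correctly threading the Kummer sequence \eqref{ExSeqAKw} and the identification of flat with Galois cohomology so that the restriction-inflation machinery applies cleanly; once the modules are correctly normalized, every kernel/cokernel identification is forced by the vanishing statements already proved in Lemmas \ref{Hflzero} and \ref{HGzero}.
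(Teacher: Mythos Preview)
Your treatment of $\beta$ is correct and matches the paper exactly: the five-term exact sequence of the Hochschild--Serre spectral sequence for $K/F$ with coefficients in $A[p^\infty]$, together with $H^2_{fl}(X_F,A[p^\infty])=0$ from Lemma~\ref{Hflzero}, immediately gives the claimed identification of $Ker(\beta)$ and $Coker(\beta)$.

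For $\gamma$ you have the right architecture (Shapiro's Lemma to reduce to local restriction maps, then a local Hochschild--Serre argument), but you invoke the wrong vanishing result at the crucial step. The map $\gamma_v: H^1_{fl}(X_{F_v},A)[p^\infty]\to H^1_{fl}(X_{K_w},A)[p^\infty]^{G_v}$ comes from the spectral sequence with coefficients in the \emph{abelian variety} $A$, not in $A[p^\infty]$. Its five-term sequence ends with
\[
\cdots \to H^1_{fl}(X_{K_w},A)^{G_v} \to H^2(G_v,A(K_w)) \to H^2_{fl}(X_{F_v},A),
\]
so to conclude $Coker(\gamma_v)\simeq H^2(G_v,A(K_w))[p^\infty]$ you need $H^2_{fl}(X_{F_v},A)=0$, which is \cite[Theorem III.7.8]{Mi1} (local duality for abelian varieties), not $H^2_{fl}(X_{F_v},A[p^\infty])=0$ from Lemma~\ref{Hflzero}~{\bf 2}. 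The latter controls a different spectral sequence (the one appearing in Lemma~\ref{HGzero}~{\bf 2}) and does not by itself give you the cokernel of $\gamma_v$. Your detour through the Kummer sequence \eqref{ExSeqAKw} and $A(K_w)\otimes\Q_p/\Z_p$ is unnecessary here and seems to be the source of the confusion; once you run Hochschild--Serre on $A$ directly and cite the correct vanishing, the argument is one line.

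For the final assertions, your argument that $H^2(G_v,A(K_w))[p^\infty]=0$ at unramified $v$ via $\cd_p(G_v)\leq 1$ is fine and is essentially what the paper does. For $H^1(G_v,A(K_w))=0$ when $v\notin S$, your appeal to ``local Tate-type duality or a direct computation with the reduction'' is too vague; the paper uses the identification $H^i(G_v,A(K_w))\simeq H^i(G_v,\pi_0(\mathcal{A}(F_v)_0))$ from \cite[Proposition I.3.8]{Mi1}, which makes the vanishing immediate since $\pi_0$ is trivial at primes of good reduction.
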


\begin{proof} For the map $\beta$ use the five term exact sequence of the Hochschild-Serre spectral sequence
(see \cite[Proposition III.2.20 and Remark III.2.21]{Mi2}) recalling that $H^2_{fl}(X_F,A[p^\infty])=0$ (by
Lemma \ref{Hflzero})  to get $Ker(\beta) = H^1(G,A(K)[p^\infty])$ and $Coker(\beta) = H^2(G,A(K)[p^\infty])$.
For the map $\g$, by Shapiro's Lemma
\[ \Coind_G^{G_v} H^1_{fl}(X_{K_w},A)[p^\infty]^G \simeq H^1_{fl}(X_{K_w},A)[p^\infty]^{G_v} \ ,\]
(for some fixed place $w$ dividing $v$). The map $\gamma$ can be written as
\[ \gamma\,:\, \prod_{v\in \Sigma_F} H^1_{fl}(X_{F_v},A)[p^\infty] \longrightarrow
\prod_{v\in \Sigma_F} H^1_{fl}(X_{K_w},A)[p^\infty]^{G_v} \]
and \eqref{EqKerGamma} comes from the local version of the previous five term sequence (recalling that $H^2_{fl}(X_{F_v},A)=0$ by
\cite[Theorem III.7.8]{Mi1}).\\
If $v$ is unramified then, since we are assuming that $G$ has no element of order $p$, we have $G_v=0$ ($v$ is
totally split)
or $G_v\simeq \Z_p$ and $K_w=F_v^{unr}$ ($v$ is inert). When $G_v=0$ there is nothing to prove so we assume
$G_v\simeq \Z_p$ from now on.
The proof of \cite[Proposition I.3.8]{Mi1} can be generalized to show that
\[ H^i(G_v,A(K_w))\simeq H^i(G_v,\pi_0(\mathcal{A}(F_v)_0))\quad {\rm for\ any}\ i\geq 1\ ,\]
where $\mathcal{A}(F_v)_0$ is the closed fiber of the N\'eron model $\mathcal{A}(F_v)$ of $A$ at $v$ and $\pi_0$
denotes the set of connected components. Hence those groups are finite and trivial for $i\geqslant 2$, moreover,
if $v\not\in S$ (i.e., $v$ is of good reduction), then $H^1(G_v,A(K_w))=0$ as well.
\end{proof}

\subsection{Formula for the Euler characteristic}
We are now ready to prove our Euler characteristic formula for $Sel(K)\,$.

\begin{thm}\label{ThmEuChp}
With notations as above, assume that\begin{itemize}
\item[{\bf 1.}] $Sel(F)$ and $Sel_{A^t}(F)_p$ are finite (hence of order equal to $|\ts(A/F)[p^\infty]|$ and
$|\ts(A^t/F)[p^\infty]|$ respectively);
\item[{\bf 2.}] $\chi(G,A(K)[p^\infty])$ is well defined;
\item[{\bf 3.}] $H^i(G_v,A(K_w))[p^\infty]$ (for any $w$ lying over a ramified place $v$) is finite for $i=1,2$.
\end{itemize}
Then,
\[ \begin{array}{ll} \displaystyle{\frac{\chi(G,Sel(K))}{\chi^{(2)}(G,Sel(K))}} & =
\displaystyle{\frac{\chi(G,A(K)[p^\infty])}{\chi^{(3)}(G,A(K)[p^\infty])}
\cdot\frac{|\ts(A/F)[p^\infty]||NS(\psi_K)|}{|A^t(F)[p^\infty]||A(F)[p^\infty]|} }
\cdot \prod_{\begin{subarray}{c} v\in S \\v\ inert \end{subarray}}p^{\nu_v}\\
\ &  \cdot\displaystyle{\prod_{\begin{subarray}{c} v\in S \\v\ ram. \end{subarray}}
\frac{|H^1(G_v,A(K_w))[p^\infty]|}{|H^2(G_v,A(K_w))[p^\infty]|} }  \end{array} \]
where $p^{\nu_v}$ is the order of the $p$-part of the set of
connected components of the closed fiber of the N\'eron model of $A$ at $v$.
Moreover, for any ramified place $v$,  $|H^1(G_v,A(K_w))|=|\widehat{H}^0(G_v,A^t(K_w))|$
($\widehat{H}$ denotes the Tate cohomology groups).
\end{thm}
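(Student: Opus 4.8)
The plan is to assemble the formula from the three descent diagrams \eqref{Diag1p}, \eqref{Diag2p}, \eqref{Diag3p}, using Lemma \ref{H^0/H^1} as the backbone. By that lemma,
\[ \frac{\chi(G,Sel(K))}{\chi^{(2)}(G,Sel(K))} = \frac{|Sel(F)||NS(\psi_K)|}{|Coker(\psi_F)|}\cdot\frac{|Coker(\beta)|}{|Ker(\beta)|}\cdot\frac{|Ker(\gamma)|}{|Coker(\gamma)|}\,, \]
so the task reduces to identifying each of the five factors on the right. The factor $|Sel(F)|$ is $|\ts(A/F)[p^\infty]|$ by hypothesis \textbf{1} and \eqref{EqSha}. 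For the $\beta$ and $\gamma$ factors I would quote Lemma \ref{LocHochSer} directly: $Ker(\beta)=H^1(G,A(K)[p^\infty])$, $Coker(\beta)=H^2(G,A(K)[p^\infty])$, and $Ker(\gamma)\simeq\prod_v H^1(G_v,A(K_w))[p^\infty]$, $Coker(\gamma)\simeq\prod_v H^2(G_v,A(K_w))[p^\infty]$, where the products are effectively over $v\in S$ since the unramified-good contributions vanish. The inert places $v\in S$ are the remaining subtlety in the $\gamma$-factor: there $G_v\simeq\Z_p$ has cohomological dimension one, so $H^2(G_v,A(K_w))[p^\infty]=0$, while $H^1(G_v,A(K_w))\simeq H^1(G_v,\pi_0(\mathcal{A}(F_v)_0))=\widehat H^0(\Z_p,\pi_0)$ has order equal to the $p$-part of $|\pi_0(\mathcal{A}(F_v)_0)|$, i.e. $p^{\nu_v}$; this is exactly where the factor $\prod_{v\ inert}p^{\nu_v}$ comes from, and I would spell this computation out.

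Next I would handle $|Coker(\psi_F)|$, which is the one genuinely global factor and the place where the dual abelian variety and the Tate--Shafarevich group of $A^t$ enter. The idea is to invoke the Poitou--Tate / Cassels--Tate style global duality for Selmer groups over the global function field $F$: the cokernel of the global-to-local map $\psi_F$ for $A$ is Pontrjagin-dual to (a piece of) the Selmer group of $A^t$ over $F$. Concretely, one has an exact sequence relating $Coker(\psi_F)$ to $Sel_{A^t}(F)_p^\vee$ modulo the contribution of $A^t(F)[p^\infty]$ and $A(F)[p^\infty]$ (these rational $p$-torsion terms are the usual discrepancy between $H^1$ and the Selmer group in the nine-term Poitou--Tate sequence). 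Under hypothesis \textbf{1}, $|Sel_{A^t}(F)_p|=|\ts(A^t/F)[p^\infty]|$ and $A^t(F)$ has rank zero, so $A^t(F)\otimes\Q_p/\Z_p=0$; putting this together should give $|Coker(\psi_F)| = |\ts(A^t/F)[p^\infty]|\cdot|A(F)[p^\infty]|\cdot|A^t(F)[p^\infty]|^{-1}$ up to reorganizing, which feeds the factor $\dfrac{1}{|A^t(F)[p^\infty]||A(F)[p^\infty]|}$ after we also match $|\ts(A^t/F)[p^\infty]|$ against part of the $\chi$-quotient — wait, more carefully: the $\ts(A^t/F)$ term must cancel, so I expect it to reappear from the local duality statement at the end. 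This is the step I expect to be the main obstacle: pinning down the precise form of the global duality exact sequence in the flat-cohomology function-field setting (following Milne's arithmetic duality theorems) and bookkeeping the rational torsion correction terms without sign or inversion errors.

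Finally I would combine everything. Collecting the identified factors, the ratio becomes
\[ \frac{\chi(G,Sel(K))}{\chi^{(2)}(G,Sel(K))} = \frac{|\ts(A/F)[p^\infty]||NS(\psi_K)|}{|A^t(F)[p^\infty]||A(F)[p^\infty]|}\cdot\frac{|H^2(G,A(K)[p^\infty])|}{|H^1(G,A(K)[p^\infty])|}\cdot\prod_{v\ inert}p^{\nu_v}\cdot\prod_{v\ ram.}\frac{|H^1(G_v,A(K_w))[p^\infty]|}{|H^2(G_v,A(K_w))[p^\infty]|}\,. \]
It then remains to rewrite $\dfrac{|H^2(G,A(K)[p^\infty])|}{|H^1(G,A(K)[p^\infty])|}$ in terms of the truncated Euler characteristics of $A(K)[p^\infty]$. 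Since $H^0(G,A(K)[p^\infty])=A(F)[p^\infty]$ is already accounted for, and $\chi(G,A(K)[p^\infty])=|H^0|\cdot|H^1|^{-1}\cdot|H^2|\cdot|H^3|^{-1}\cdots$ while $\chi^{(3)}(G,A(K)[p^\infty])=|H^3|^{-1}|H^4|\cdots$, the quotient $\dfrac{\chi(G,A(K)[p^\infty])}{\chi^{(3)}(G,A(K)[p^\infty])}$ equals $|H^0|\cdot|H^1|^{-1}\cdot|H^2| = |A(F)[p^\infty]|\cdot\dfrac{|H^2(G,A(K)[p^\infty])|}{|H^1(G,A(K)[p^\infty])|}$; solving for the $H^2/H^1$ ratio and substituting produces the stated formula. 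For the last sentence, $|H^1(G_v,A(K_w))|=|\widehat H^0(G_v,A^t(K_w))|$, I would invoke local Tate duality for abelian varieties over the local field $F_v$ (again from Milne): $H^1(G_v,A(K_w))$ and $\widehat H^0(G_v,A^t(K_w))$ (or more precisely $A^t(F_v)/N_{K_w/F_v}A^t(K_w)$) are in perfect pairing, hence have the same (finite) order. I do not anticipate difficulty here beyond citing the correct duality theorem.
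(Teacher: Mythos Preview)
Your overall architecture is exactly the paper's: start from Lemma~\ref{H^0/H^1}, identify the $\beta$- and $\gamma$-factors via Lemma~\ref{LocHochSer}, separate the inert contribution $p^{\nu_v}$ from the ramified one, and rewrite $|H^2|/|H^1|$ as $\dfrac{\chi(G,A(K)[p^\infty])}{\chi^{(3)}(G,A(K)[p^\infty])\,|A(F)[p^\infty]|}$. The local duality claim at ramified $v$ is also fine (the paper cites \cite[Cor.~2.3.3]{Tan1} rather than Milne, but the content is the same).

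The genuine gap is your treatment of $|Coker(\psi_F)|$, and it propagates into an arithmetic error in the final assembly. The correct value is simply
\[ |Coker(\psi_F)| = |A^t(F)[p^\infty]| \,,\]
with no $\ts(A^t/F)$ and no $|A(F)[p^\infty]|$ involved. The paper obtains this via the generalized Cassels--Tate sequence of Gonz\'alez-Avil\'es--Tan \cite{GAV1}: one has $Coker(\psi_F)\simeq \bigl(T_p\,Sel_{A^t}(F)_p\bigr)^\vee$; hypothesis~{\bf 1} makes $\ts(A^t/F)[p^\infty]$ finite, so $T_p(\ts(A^t/F))=0$, and the Kummer sequence then gives $T_p\,Sel_{A^t}(F)_p\simeq \varprojlim_m A^t(F)/p^m = A^t(F)^*$, which for the finite group $A^t(F)$ is just $A^t(F)[p^\infty]$. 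So $\ts(A^t/F)$ never ``reappears'' anywhere. With your (incorrect) intermediate formula already carrying $|A^t(F)[p^\infty]||A(F)[p^\infty]|$ in the denominator \emph{before} substituting $|H^2|/|H^1|=\dfrac{\chi/\chi^{(3)}}{|A(F)[p^\infty]|}$, you end up with an extra factor of $|A(F)[p^\infty]|^{-1}$ and the formula does not close. Fix $|Coker(\psi_F)|=|A^t(F)[p^\infty]|$ and the bookkeeping goes through exactly as in the paper.
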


\begin{proof} We compute the terms of the formula of Lemma \ref{H^0/H^1}. By Lemma \ref{LocHochSer}
\[ \frac{|Coker(\beta)|}{|Ker(\beta)|}=\frac{|H^2(G,A(K)[p^\infty])|}{|H^1(G,A(K)[p^\infty])|} =
\frac{\chi(G,A(K)[p^\infty])}{\chi^{(3)}(G,A(K)[p^\infty])|A(F)[p^\infty]|} \ .\]
We are left with $Ker(\gamma)$ and $Coker(\gamma)$ and again use the description provided by Lemma \ref{LocHochSer}.
If $v$ ramifies in $K/F$, then, by \cite[Corollary 2.3.3]{Tan1}, $H^1(G_v,A(K_w))$ is the annihilator of the norm
from $K_w$ to $F_v$ of the group $A^t(K_w)$, with respect to the local Tate pairing. Hence
\[ \begin{array}{lcl} \displaystyle{ \prod_{\begin{subarray}{c} v\in S\\ v\ ram.\end{subarray}} |H^1(G_v,A(K_w))|} & = &
\displaystyle{ \prod_{\begin{subarray}{c} v\in S\\ v\ ram.\end{subarray}} |A^t(F_v)/N_{K_w/F_v}(A^t(K_w))| } \\
\ & = & \displaystyle{ \prod_{\begin{subarray}{c} v\in S\\ v\ ram.\end{subarray}} |\widehat{H}^0(G_v,A^t(K_w))|\ .} \end{array} \]
If $v\in S$ is unramified (hence of bad reduction for $A$), then, by \cite[Proposition I.3.8]{Mi1}
\[ H^1(G_v,A(K_w)) \simeq \left\{ \begin{array}{ll} 0 & {\rm if}\ G_v=0 \\
\ & \\
H^1(G_v,\pi_0(\mathcal{A}(F_v)_0)) & {\rm if}\ G_v\simeq \Z_p \end{array} \right. \ ,\]
(notations as in Lemma \ref{LocHochSer}). When $G_v\simeq \Z_p$ is procyclic (i.e., when $v$ is inert) we have
\[ |H^1(G_v,\pi_0(\mathcal{A}(F_v)_0))|=|H^0(G_v,\pi_0(\mathcal{A}(F_v)_0))|=|p{\rm -part\ of\ }\pi_0(\mathcal{A}(F_v)_0)| \]
and we denote this value by $p^{\nu_v}\,$. Therefore
\[ |Ker(\gamma)|= \prod_{\begin{subarray}{c} v\in S\\ v\ ram.\end{subarray}}  |\widehat{H}^0(G_v,A^t(K_w))[p^\infty]|\cdot
\prod_{\begin{subarray}{c} v\in S\\ v\ inert \end{subarray}} p^{\nu_v} \ .\]

For $Coker(\psi_F)$, fix a natural number $m$, then the commutative diagram
\[ \xymatrix{ A(F)/p^m A(F) \ar@{^(->}[r] \ar@{=}[d] & Sel_A(F,p^m) \ar@{->>}[r] \ar@{^(->}[d] &
\ts(A/F)[p^m] \ar@{^(->}[d] \\
A(F)/p^m A(F) \ar[d] \ar@{^(->}[r] & H^1_{fl}(X_F,A[p^m]) \ar@{->>}[r] \ar[d]^{\psi_{F,m}} &
H^1_{fl}(X_F,A)[p^m] \ar[d]^{\widetilde{\psi}_{F,m}} \\
 0 \ar[r] & \displaystyle{\prod_{v\in \Sigma} H^1_{fl}(X_{F_v},A[p^m])/Im(\kappa_v)} \ar[r]^{\quad\sim} &
\displaystyle{ \prod_{v\in \Sigma} H^1_{fl}(X_{F_v},A)[p^m]} } \]
shows that $Coker(\psi_{F,m})\simeq Coker(\widetilde{\psi}_{F,m})$.
Taking direct limits on $m$ and using \cite[Main Theorem]{GAV1}, one has
\[ Coker(\psi_F) \simeq Coker(\widetilde{\psi}_F) \simeq (T_p(Sel_{A^t}(F)_p))^\vee:= \left(\il{m} Sel_{A^t}(F,p^m)\right)^\vee \ .\]
Now assumption {\bf 1} yields $T_p(\ts(A^t/F)[p^m])=0$, hence the upper sequence of the diagram above
(substituting $A^t$ for $A$) shows that
\[ Coker(\psi_F) \simeq (T_p(Sel_{A^t}(F)_p))^\vee \simeq \left( \il{m} A^t(F)/p^m A^t(F)\right)^\vee:= (A^t(F)^*)^\vee \]
(the Pontrjagin dual of the $p$-adic completion $A^t(F)^*$ of $A^t(F)$, see also \cite[equation (6)]{GAV1}).
By hypothesis {\bf 1}, $A^t(F)$ is finite, hence
\[ A^t(F)^* \simeq A^t(F)[p^\infty] \ .\]
Now just substitute the computations above in the formula of Lemma \ref{H^0/H^1}.
\end{proof}

\begin{cor}\label{ThmEuChpZp}
Assume that $K/F$ is a $\Z_p$-extension (i.e., $G\simeq \Z_p$) and that $Sel(F)$ and $Sel_{A^t}(F)_p$ are finite.
Then,
\[ \chi(G,Sel(K)) = \frac{\chi(G,A(K)[p^\infty])|\ts(A/F)[p^\infty]||NS(\psi_K)|}{|A^t(F)[p^\infty]||A(F)[p^\infty]|}
\prod_{\begin{subarray}{c} v\in S \\v\ ram. \end{subarray}} |\widehat{H}^0(G_v,A^t(K_w))[p^\infty]|
\prod_{\begin{subarray}{c} v\in S \\v\ inert \end{subarray}} p^{\nu_v} \ . \]
\end{cor}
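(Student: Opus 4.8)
The plan is to specialize Theorem \ref{ThmEuChp} to the case $G\simeq\Z_p$, where the cohomological dimension is $d=1$, and check that all the hypotheses of the theorem are automatically satisfied under the single assumption that $Sel(F)$ and $Sel_{A^t}(F)_p$ are finite. First I would observe that since $\cd_p(G)=1$, we have $H^i(G,M)=0$ for all $i\geq 2$ and any $p$-torsion $G$-module $M$; in particular $\chi^{(2)}(G,Sel(K))=1$, so the left-hand side of the formula in Theorem \ref{ThmEuChp} collapses to $\chi(G,Sel(K))$ itself, and likewise $\chi^{(3)}(G,A(K)[p^\infty])=1$, so the first factor on the right becomes simply $\chi(G,A(K)[p^\infty])$. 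This already reduces the statement to verifying that the three numbered hypotheses of Theorem \ref{ThmEuChp} hold.

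Next I would dispatch the hypotheses one at a time. Hypothesis \textbf{1} is exactly the assumption of the corollary. For hypothesis \textbf{2}, the module $A(K)[p^\infty]$ is a cofinitely generated $\Z_p$-module with an action of $G\simeq\Z_p$; since $\cd_p(G)=1$, only $H^0$ and $H^1$ can be nonzero, and for a procyclic group $H^0(G,M)$ and $H^1(G,M)$ are the kernel and cokernel of $\gamma-1$ on $M$ (where $\gamma$ is a topological generator), which for a cofinitely generated $\Z_p$-module are both finite and in fact of equal order — so $\chi(G,A(K)[p^\infty])$ is well defined (and one even sees a priori that it equals $1$, though we do not need that). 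For hypothesis \textbf{3}, the groups $H^i(G_v,A(K_w))[p^\infty]$ with $G_v$ a closed subgroup of $\Z_p$ (hence $0$ or $\Z_p$) are finite by the same procyclic-cohomology argument together with the description of $H^1$ via the N\'eron model in Lemma \ref{LocHochSer}; indeed $H^2$ vanishes entirely and $H^1$ is finite.

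Finally I would substitute into the formula of Theorem \ref{ThmEuChp}: the product over ramified $v$ of $|H^1(G_v,A(K_w))[p^\infty]|/|H^2(G_v,A(K_w))[p^\infty]|$ simplifies because each $H^2(G_v,A(K_w))[p^\infty]=0$, and the last sentence of Theorem \ref{ThmEuChp} identifies $|H^1(G_v,A(K_w))|$ with $|\widehat{H}^0(G_v,A^t(K_w))|$; the inert-place factor $\prod p^{\nu_v}$ carries over verbatim. Assembling these gives precisely the asserted formula. I expect no serious obstacle here: the only point requiring a little care is confirming that $\chi(G,A(K)[p^\infty])$ is well defined (hypothesis \textbf{2}) rather than taking it for granted, but this is immediate from $\cd_p(\Z_p)=1$ and the cofinite generation of $A(K)[p^\infty]$ as a $\Z_p$-module.
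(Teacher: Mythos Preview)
Your approach is the same as the paper's --- specialize Theorem \ref{ThmEuChp} to $G\simeq\Z_p$, check the hypotheses, and simplify using $\cd_p(G)=1$ --- and the overall structure is fine. There is, however, a genuine slip in your verification of hypothesis \textbf{2}. You assert that for a cofinitely generated $\Z_p$-module $M$ with a $\Z_p$-action, the kernel and cokernel of $\gamma-1$ are ``both finite and in fact of equal order''. This is false in general: take $M=\Q_p/\Z_p$ with trivial action, where both are all of $M$. What is true is that \emph{if one of them is finite then so is the other}, and then they have equal order. So you must first observe that $H^0(G,A(K)[p^\infty])=A(F)[p^\infty]$ is finite (e.g.\ by Mordell--Weil/Lang--N\'eron, or via the finiteness of $Sel(F)$), and only then conclude finiteness of $H^1$. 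The paper makes exactly this point, citing \cite[Lemma 3.4]{BL2} after noting $A(F)[p^\infty]$ is finite.

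A smaller point: your appeal to ``the description of $H^1$ via the N\'eron model in Lemma \ref{LocHochSer}'' to handle hypothesis \textbf{3} is misplaced for \emph{ramified} $v$, since that description is stated only for unramified places. Your claim that $H^2(G_v,A(K_w))=0$ because $\cd(\Z_p)=1$ is correct (and in fact a bit slicker than the paper's short exact sequence argument showing only $H^2[p^\infty]=0$); the paper, like you, does not separately justify the finiteness of $H^1(G_v,A(K_w))[p^\infty]$ at ramified places and simply carries the term into the formula via the identification with $|\widehat{H}^0(G_v,A^t(K_w))|$.
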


\begin{proof}
We have that $A(F)[p^\infty]$ is finite, then $H^1(G,A(K)[p^\infty])$
is finite as well (by \cite[Lemma 3.4]{BL2}) and  $H^i(G,A(K)[p^\infty])=0$ for any
$i\geq 2$: so $\chi(G,A(K)[p^\infty])$ is well defined. For the local terms let $v$ be a ramified place and
consider the sequence
\[ \xymatrix{ A(K_w)[p] \ar@{^(->}[r] & A(K_w) \ar@{->>}[r]^p & pA(K_w) } \ .\]
Taking $G_v$-cohomology one gets a surjection
\[ \xymatrix{ H^2(G_v,A(K_w)[p]) \ar@{->>}[r] & H^2(G_v,A(K_w))[p]  \ .}\]
The module on the left is trivial, so we have $H^2(G_v,A(K_w))[p]=0$ and $H^2(G_v,A(K_w))[p^\infty]=0$ as well.

\noindent Now just plug everything into (what remains of) the formula of the previous theorem.
\end{proof}

\begin{rem}\label{RemFinTor}
If $A(K)[p^\infty]$ is finite and $G\simeq \Z_p\,$, then
\[ |H^1(G,A(K)[p^\infty])| = |H^0(G,A(K)[p^\infty])| = |A(F)[p^\infty]| \]
and $\chi(G,A(K)[p^\infty])=1$. This is almost always the case (see, e.g., \cite[Proposition 2.11]{Tan2} and the
references mentioned there): for example it holds when $E$ is an elliptic curve by \cite[Theorem 4.2]{BLV}.
\end{rem}

\begin{cor}\label{CorEllCurp}
If $K=F^{ar}$ is the arithmetic $\Z_p$-extension of $F$ (i.e., generated by the $\Z_p$-extension of its field of constants),
$A=E$ is an elliptic curve and $Sel_E(F)_p$ is finite, then
\[ \chi(G,Sel_E(F^{ar})_p) =
 \frac{|\ts(E/F)[p^\infty]||NS(\psi_{F^{ar}})|}{|E(F)[p^\infty]|^2}
\prod_{\begin{subarray}{c} v\in S \\v\ inert \end{subarray}} p^{\nu_v} \ . \]
\end{cor}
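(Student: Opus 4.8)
The plan is to deduce this directly from Corollary \ref{ThmEuChpZp} applied with $A=E$ and $K=F^{ar}$, after checking that the hypotheses hold and that three of the factors in that formula degenerate. First, an elliptic curve is canonically isomorphic to its dual, so $E^t=E$; in particular the hypothesis ``$Sel_{A^t}(F)_p$ finite'' is just the assumed finiteness of $Sel_E(F)_p$, and the denominator $|A^t(F)[p^\infty]|\,|A(F)[p^\infty]|$ becomes $|E(F)[p^\infty]|^2$.

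Next I would observe that $F^{ar}/F$ is unramified at every place of $F$. Indeed $F^{ar}$ is obtained by extending the (finite) field of constants of $F$ by its unique $\Z_p$-extension, and a constant-field extension of a complete discretely valued field with finite residue field is unramified. Hence the set $\{v\in S:\ v\text{ ramified in }K/F\}$ is empty, so the product $\prod_{v\in S,\,v\ ram.}|\widehat{H}^0(G_v,E^t(K_w))[p^\infty]|$ is an empty product and equals $1$; the term $|NS(\psi_{F^{ar}})|$ and the product over inert places in $S$ remain.

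It then remains to handle the factor $\chi(G,E(F^{ar})[p^\infty])$. Here $G\simeq\Z_p$, and $E(F^{ar})[p^\infty]$ is finite by \cite[Theorem 4.2]{BLV}; this finiteness already guarantees (as in the proof of Corollary \ref{ThmEuChpZp}, via \cite[Lemma 3.4]{BL2}) that $\chi(G,E(F^{ar})[p^\infty])$ is well defined, and Remark \ref{RemFinTor} gives $\chi(G,E(F^{ar})[p^\infty])=1$. Substituting these three observations into the formula of Corollary \ref{ThmEuChpZp} leaves exactly
\[ \chi(G,Sel_E(F^{ar})_p)=\frac{|\ts(E/F)[p^\infty]|\,|NS(\psi_{F^{ar}})|}{|E(F)[p^\infty]|^2}\prod_{\begin{subarray}{c} v\in S\\ v\ inert\end{subarray}}p^{\nu_v}\,. \]

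There is no real obstacle in this argument; the only point I would take care to state explicitly is that $F^{ar}/F$ is everywhere unramified, since that is precisely what makes the ramified-place contributions (and hence the dependence on the local Tate duality terms) vanish, leaving a clean expression in terms of $\ts(E/F)$, the torsion $E(F)[p^\infty]$, the defect $NS(\psi_{F^{ar}})$, and the local Tamagawa-type factors $p^{\nu_v}$ at the inert primes of bad reduction.
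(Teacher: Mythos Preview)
Your argument is correct and follows exactly the same route as the paper: the paper's proof simply notes that $E\simeq E^t$, that $A(F^{ar})[p^\infty]$ is finite, and that there are no ramified primes in $F^{ar}/F$, then substitutes into Corollary~\ref{ThmEuChpZp}. Your version supplies the same three observations with additional justification (the constant-field-extension argument for unramifiedness, the citation to \cite[Theorem 4.2]{BLV} for finiteness, and the appeal to Remark~\ref{RemFinTor}), so it is a fully fleshed-out version of the paper's one-line proof.
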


\begin{proof} Just note that $E\simeq E^t$, $A(F^{ar})[p^\infty]$ is finite and there are no ramified primes in $F^{ar}/F$.
\end{proof}

\begin{rem}\label{RempsiKSurj}
When $\ell\neq p$ and $K=F(E[\ell^\infty])$ the map (analogous to $\psi_K$) which defines the $\l$-Selmer group
$Sel_E(K)_\l$ is surjective (see \cite[Theorem III.27]{S}). We are not aware of any other result on the surjectivity
of this kind of maps in positive characteristic: it would be interesting to investigate the subject further for general
global fields. Anyway, all the Euler characteristic formulas of this section would hold for surjective
$\psi_K$ just substituting $|NS(\psi_K)|$ with 1.
\end{rem}

\section{Akashi series for $\Z_p^d$-extensions}
The previous formulas for the Euler characteristic did not involve special values of $p$-adic $L$-functions (at least not directly).
The absence is mainly due to the lack of an appropriate definition of such functions in characteristic $p$: to fill this gap at least
in one case we compute here the Akashi series of the Pontrjagin dual of the $p$-Selmer group of a $\Z_p^d$-extension ($d<\infty$, but
the $d=\infty$ case for the totally ramified $\Z_p^\infty$-extensions described in \cite[Section 3]{BBL1} can be treated similarly
via a limit process). If $A$ is a constant abelian variety the $p$-adic $L$-function for this setting has been recently
provided in \cite{LLTT}.

\noindent Let $F_d$ be a field extension of $F$ such that $G=\Gal(F_d/F)\simeq \Z_p^d$ ($d\geqslant 2$) and fix topological generators
$\g_1,\dots,\g_d$ with $G_i=\overline{\langle \g_{i+1},\dots,\g_d\rangle}\simeq \Z_p^{d-i}$ ($0 \leq i \leq d-1$) and
$G/G_i\simeq \overline{\langle\g_1,\dots,\g_i\rangle} \simeq \Z_p^i$.
The picture of the field extension is the following:
\[ \begin{xy}
(0,5)*+{F_d}="v1";(0,-5)*+{F_{d-1}}="v2";(0,-15)*+{\vdots}="v3";
(0,-25)*+{F_2}="v4";(0,-35)*+{F_1}="v5";(0,-45)*+{F}="v6";
{\ar@{-}^{\hbox{\scriptsize $\overline{\langle\gamma_d\rangle}$}} "v1";"v2"};
{\ar@{-} "v2";"v3"};{\ar@{-}"v3";"v4"};
{\ar@{-}^{\hbox{\scriptsize $\overline{\langle\gamma_2\rangle}$}} "v4";"v5"};
{\ar@{-}^{\hbox{\scriptsize $\overline{\langle\gamma_1\rangle}$}} "v5";"v6"};
{\ar@{-}@/_{3.6pc}/_{G=G_0} "v1";"v6"};
{\ar@{-}@/_{2.1pc}/_{G_1} "v1";"v5"};
\end{xy}  \]
For any group $H$ we let $\L(H)$ be the associated Iwasawa algebra and, to shorten notations, we set $\L_i:=\L(G/G_i)$
and let $\pi^i_j: \L_i \rightarrow \L_j$ be the natural projection for any $i>j$ (note that $G$ is abelian, hence
$\L_i\simeq\Z_p[[t_1,\dots,t_i]]$). For every $\L_i$-module $M$, we denote by $Ch_{\L_i}(M)=(\ch_{\L_i}(M))$ its characteristic ideal.
Moreover, to shorten notations, in this section we put $\Sel$ for the $p$-Selmer group of $F_d$, i.e., what we previously denoted
by $Sel(F_d)$ and $\S$ for its Pontrjagin dual $\Sel^\vee$. The fact that $\S\in \M_{G_1}(G)$ has been proved in many different
cases for example in \cite{BBL1}, \cite{BV1}, \cite{Tan2} and \cite{Wi}.

An important step here is the relation between $f_{\S,G_1}$ and $\ch_{\L_d}(\S)$: such a result can also be taken from
\cite[Lemmas 4.3 and 4.4]{CSS}, we give a different proof here (based on the following lemma) for completeness.

\begin{lem}\label{BBL-NY}
With notation as above let $s>1$, then, for every finitely generated torsion $\L_s$-module
$M$, we have:
\[  Ch_{\L_{s-1}}(M^{\overline{\langle\gamma_{s}\rangle}})\pi^{s}_{s-1}(Ch_{\L_s}(M))=Ch_{\L_{s-1}}(M/(\gamma_s-1)) \,.\]
\end{lem}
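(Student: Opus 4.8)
The plan is to reduce the identity to the well-known behaviour of characteristic ideals in a short exact sequence, applied to the "multiplication by $\gamma_s - 1$" endomorphism of $M$. First I would write $\sigma := \gamma_s - 1 \in \L_s$ and consider the four-term exact sequence of finitely generated $\L_s$-modules
\[
0 \longrightarrow M^{\overline{\langle\gamma_s\rangle}} \longrightarrow M \stackrel{\sigma}{\longrightarrow} M \longrightarrow M/\sigma M \longrightarrow 0 \,,
\]
where $M^{\overline{\langle\gamma_s\rangle}} = M[\sigma] = \ker(\sigma)$ since $\overline{\langle\gamma_s\rangle}$ is topologically generated by $\gamma_s$. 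Breaking this into two short exact sequences through the image $\sigma M$, multiplicativity of $Ch_{\L_s}$ on short exact sequences of torsion modules gives $Ch_{\L_s}(M[\sigma]) = Ch_{\L_s}(M/\sigma M)$ as ideals of $\L_s$; note in particular $M[\sigma]$ and $M/\sigma M$ have the same $\L_s$-characteristic ideal even though individually neither need be pseudo-null.

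Next I would descend to $\L_{s-1}$. Since $\L_s \simeq \Z_p[[t_1,\dots,t_s]]$ with $t_s = \gamma_s - 1 = \sigma$, and $\L_{s-1} \simeq \L_s/\sigma\L_s$ via $\pi^s_{s-1}$, both $M[\sigma]$ and $M/\sigma M$ are naturally $\L_{s-1}$-modules. The key input is the standard comparison between the $\L_s$-characteristic ideal of a torsion module $N$ killed by $\sigma$ and its $\L_{s-1}$-characteristic ideal: if $N$ is a finitely generated torsion $\L_{s-1}$-module, then, viewed as an $\L_s$-module, $N$ is still torsion and
\[
Ch_{\L_s}(N) = \sigma \cdot \pi^s_{s-1}\!\big({\rm lift\ of\ }Ch_{\L_{s-1}}(N)\big)
\]
— i.e.\ the $\L_s$-characteristic polynomial picks up exactly one factor of $t_s = \sigma$ per unit of $\L_{s-1}$-length. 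Applying this to $N = M[\sigma]$ and to $N = M/\sigma M$, the common factor $\sigma$ cancels and one obtains $Ch_{\L_{s-1}}(M[\sigma]) = Ch_{\L_{s-1}}(M/\sigma M)$ — but this is not yet the claimed formula, because the hypothesis that $M/\sigma M$ is $\L_{s-1}$-torsion (equivalently that $\sigma$ is not in the support, i.e.\ $M$ has no summand with $Ch_{\L_s}$ divisible by $\sigma$) may fail.

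So the remaining, and genuinely load-bearing, step is to handle the part of $M$ on which $\sigma$ fails to act injectively after inverting other primes — equivalently to keep track of the factor $\pi^s_{s-1}(Ch_{\L_s}(M))$ itself. Here I would argue directly: apply multiplicativity of $Ch_{\L_{s-1}}$ to the two short exact sequences
\[
0 \to M[\sigma] \to M \to \sigma M \to 0, \qquad 0 \to \sigma M \to M \to M/\sigma M \to 0
\]
after first tensoring with $\L_{s-1}$ (i.e.\ taking $-\otimes_{\L_s}\L_{s-1}$ and its derived functors), using that $\mathrm{Tor}^{\L_s}_1(M,\L_{s-1}) \simeq M[\sigma]$ and $M\otimes_{\L_s}\L_{s-1}\simeq M/\sigma M$, and that $\L_s$ has the right homological properties (regular local/Auslander regular of dimension $s+1$) so that Euler-characteristic arguments with $Ch_{\L_{s-1}}$ are legitimate. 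Computing the alternating product of $Ch_{\L_{s-1}}$ over the long exact $\mathrm{Tor}$-sequence for $M \xrightarrow{\sigma} M$ yields precisely
\[
Ch_{\L_{s-1}}(M[\sigma]) = Ch_{\L_{s-1}}(M/\sigma M)\cdot \pi^s_{s-1}\!\big(Ch_{\L_s}(M)\big)^{-1},
\]
which rearranges to the statement.

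**Main obstacle.** The subtle point — and the one I expect to require the most care — is the bookkeeping of the $\sigma$-factor when $M$ itself is not assumed $\sigma$-torsion-free: one must either invoke the precise "specialization of characteristic ideals" lemma (the analogue for $\Z_p[[t_1,\dots,t_s]]$ of the classical $\L$-module specialization result, as in work of Perrin-Riou or the multivariable version used in Iwasawa theory over $\Z_p^d$) or redo the $\mathrm{Tor}$-bookkeeping by hand, being careful that all the modules appearing ($M[\sigma]$, $M/\sigma M$, and the relevant $\mathrm{Tor}$'s) are indeed finitely generated torsion over $\L_{s-1}$ so that their characteristic ideals are defined. Once that specialization statement is in hand, the rest is formal multiplicativity.
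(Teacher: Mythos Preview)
The paper does not give a self-contained proof here: it simply cites \cite[Proposition 2.10]{BBL}. So your instinct to recognise the statement as the multivariable ``specialisation of characteristic ideals'' lemma and to point to the standard literature is exactly what the paper itself does. At that level the approaches agree.

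That said, the direct argument you sketch has two genuine problems.

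First, the displayed claim that for a finitely generated torsion $\L_{s-1}$-module $N$ one has $Ch_{\L_s}(N)=\sigma\cdot(\text{lift of }Ch_{\L_{s-1}}(N))$ is false. Such an $N$ is annihilated both by $\sigma$ and by a nonzero element of $\L_{s-1}$, so as a $\L_s$-module it is supported in codimension $\geq 2$, hence pseudo-null, and $Ch_{\L_s}(N)=\L_s$. For instance $N=\L_{s-1}/(p)$ has $Ch_{\L_{s-1}}(N)=(p)$ but $Ch_{\L_s}(N)=(1)$, not $(\sigma p)$. So the route ``compute $Ch_{\L_s}$ of $M[\sigma]$ and $M/\sigma M$, then peel off a common $\sigma$'' cannot work; you correctly sense this step is shaky, but for the wrong reason.

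Second, the Tor argument as written is circular. The ``long exact $\mathrm{Tor}$-sequence for $M\xrightarrow{\sigma}M$'' is nothing but your original four-term sequence, and you cannot take an alternating product of $Ch_{\L_{s-1}}$ along it because the middle terms $M$ are not $\L_{s-1}$-modules. What you are really asserting is that $\prod_i Ch_{\L_{s-1}}\bigl(\mathrm{Tor}_i^{\L_s}(M,\L_{s-1})\bigr)^{(-1)^i}=\pi^s_{s-1}(Ch_{\L_s}(M))$, which is exactly the lemma; nothing in your bookkeeping produces the factor $\pi^s_{s-1}(Ch_{\L_s}(M))$ independently. The usual proof (and the content of the cited reference) proceeds by checking both sides are multiplicative in short exact sequences and invariant under pseudo-isomorphism, reducing to elementary modules $M=\L_s/(f^n)$, and then computing directly: if $\sigma\nmid f$ then $M[\sigma]=0$ and $M/\sigma M\simeq \L_{s-1}/(\bar f^n)$, so both sides equal $(\bar f^n)$; if $f\sim\sigma$ then $M/\sigma M$ and $M[\sigma]$ fail to be $\L_{s-1}$-torsion and $\pi^s_{s-1}(Ch_{\L_s}(M))=(0)$, so both sides vanish. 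That structure-theorem reduction is the missing load-bearing step.
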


\begin{proof}
See \cite[Proposition 2.10]{BBL}.
\end{proof}

\begin{prop}\label{MainThmAk}
Assume $\S\in \M_{G_1}(G)$, then
\[ f_{\S,G_1} \sim \pi^d_1( \ch_{\L_d}(\S))\,.  \]
\end{prop}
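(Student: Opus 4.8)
The plan is to induct on $d$, peeling off one $\Z_p$-quotient at a time by repeatedly applying Lemma \ref{BBL-NY}, and to match the resulting telescoping product against the defining product for the Akashi series $f_{\S,G_1}=\prod_{i\geq 0}\ch_{\L_1}(H_i(G_1,\S))^{(-1)^i}$. The key point is that $G_1=\ov{\langle\g_2,\dots,\g_d\rangle}\simeq\Z_p^{d-1}$ acts on $\S$, and we want to understand $H_i(G_1,\S)$ as $\L_1$-modules. Since $G_1$ is a $\Z_p^{d-1}$ with the chosen basis, its homology can be computed via the Koszul complex on $\g_2-1,\dots,\g_d-1$, or iteratively: if $G_1'=\ov{\langle\g_3,\dots,\g_d\rangle}$, then there is a (Hochschild--Serre / five-term, or more precisely the spectral sequence $H_p(G_1/G_1',H_q(G_1',\S))\Rightarrow H_{p+q}(G_1,\S)$) relating homology of $G_1$ to that of $G_1'$ followed by taking coinvariants/invariants under a single $\Z_p$.

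The cleanest way I would carry this out is to prove, by induction on $j$ from $d$ down to $1$, the identity
\[ \prod_{i\geq 0}\ch_{\L_j}\!\bigl(H_i(G_j,\S)\bigr)^{(-1)^i}\;\sim\;\pi^d_j\bigl(\ch_{\L_d}(\S)\bigr)\,, \]
where $G_j=\ov{\langle\g_{j+1},\dots,\g_d\rangle}$ and for $j=d$ we have $G_d=0$, so the left side is just $\ch_{\L_d}(\S)$ and the base case is trivial. For the inductive step, suppose the identity holds for $j$ and consider $G_{j-1}$, which sits in $0\to G_j\to G_{j-1}\to\ov{\langle\g_j\rangle}\to 0$ with $\ov{\langle\g_j\rangle}\simeq\Z_p$. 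Each $H_i(G_j,\S)$ is a finitely generated torsion $\L_j$-module (this is \cite[Lemma 3.1]{CFKSV}, applicable because $\S\in\M_{G_1}(G)$ and hence each $G_m$-homology group stays in the relevant category), so Lemma \ref{BBL-NY} applies to $M=H_i(G_j,\S)$ with $s=j$, giving
\[ Ch_{\L_{j-1}}\!\bigl(H_i(G_j,\S)^{\ov{\langle\g_j\rangle}}\bigr)\,\pi^j_{j-1}\bigl(Ch_{\L_j}(H_i(G_j,\S))\bigr)=Ch_{\L_{j-1}}\!\bigl(H_i(G_j,\S)/(\g_j-1)\bigr)\,. \]
Now the point is that $H_i(G_j,\S)/(\g_j-1)=H_0(\ov{\langle\g_j\rangle},H_i(G_j,\S))$ and $H_i(G_j,\S)^{\ov{\langle\g_j\rangle}}=H_1(\ov{\langle\g_j\rangle},H_i(G_j,\S))$ (homology of $\Z_p$ is concentrated in degrees $0,1$ and is exactly coinvariants and invariants). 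Feeding these into the Hochschild--Serre spectral sequence for $0\to G_j\to G_{j-1}\to\ov{\langle\g_j\rangle}\to 0$, which has only two nonzero columns and therefore degenerates into short exact sequences expressing $H_n(G_{j-1},\S)$ as an extension of a submodule of $H_n(G_j,\S)/(\g_j-1)$ by $H_{n-1}(G_j,\S)^{\ov{\langle\g_j\rangle}}$ (up to finite ambiguity that disappears at the level of characteristic ideals), one multiplies the Lemma \ref{BBL-NY} identities alternately in $i$. The $Ch_{\L_{j-1}}(\cdot/(\g_j-1))$ and $Ch_{\L_{j-1}}(\cdot^{\ov{\langle\g_j\rangle}})$ contributions telescope to give precisely $\prod_i\ch_{\L_{j-1}}(H_i(G_{j-1},\S))^{(-1)^i}$ on one side, while $\prod_i\pi^j_{j-1}(Ch_{\L_j}(H_i(G_j,\S)))^{(-1)^i}=\pi^j_{j-1}\bigl(\prod_i\ch_{\L_j}(H_i(G_j,\S))^{(-1)^i}\bigr)\sim\pi^j_{j-1}\pi^d_j(\ch_{\L_d}(\S))=\pi^d_{j-1}(\ch_{\L_d}(\S))$ by the inductive hypothesis and functoriality $\pi^j_{j-1}\circ\pi^d_j=\pi^d_{j-1}$. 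Taking $j=1$ finishes the proof, since $f_{\S,G_1}=\prod_i\ch_{\L_1}(H_i(G_1,\S))^{(-1)^i}$ by definition.

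The main obstacle I anticipate is bookkeeping the alternating product correctly: the spectral sequence relating $H_\bullet(G_{j-1},\S)$ to $H_\bullet(G_j,\S)$ contributes a degree shift (the $H_{n-1}(G_j,\S)^{\ov{\langle\g_j\rangle}}$ term sits in total degree $n$), so one must check that the signs $(-1)^i$ line up so that invariants and coinvariants of $H_i(G_j,\S)$ enter $f_{\S,G_{j-1}}$ with opposite signs, exactly matching the shape of Lemma \ref{BBL-NY}; this is a parity check that I would verify by writing out the first few $n$. A secondary technical point is justifying that each $H_i(G_j,\S)$ is $\L_j$-torsion (needed to invoke Lemma \ref{BBL-NY}), which follows from the hypothesis $\S\in\M_{G_1}(G)$ together with \cite[Lemma 3.1]{CFKSV}, and that extensions and finite kernels/cokernels in the spectral sequence do not affect characteristic ideals, which is standard. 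The map-functoriality $\pi^j_{j-1}\circ\pi^d_j=\pi^d_{j-1}$ and the compatibility $\pi(\ch(M))\sim\ch$ behavior under the projections is routine since everything is a product of ideals in the $\L_i$. Everything else is a direct assembly of Lemma \ref{BBL-NY} applied term by term.
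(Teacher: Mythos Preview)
Your proposal is correct and follows essentially the same approach as the paper: both arguments repeatedly apply Lemma \ref{BBL-NY} together with the Hochschild--Serre short exact sequences for the $\Z_p$-quotients $G_{j-1}/G_j$, and both exploit the resulting telescoping of characteristic ideals. The only difference is organizational: you package the computation as a clean downward induction on $j$ with the intermediate identity $\prod_i \ch_{\L_j}(H_i(G_j,\S))^{(-1)^i}\sim\pi^d_j(\ch_{\L_d}(\S))$, whereas the paper unwinds the same telescoping explicitly, step by step from $G_1$ to $G_2$ to $G_3$ and so on. Two minor remarks: your hedge ``up to finite ambiguity'' is unnecessary, since the spectral sequence degenerates at $E_2$ (only two nonzero columns) and gives exact short exact sequences; and the direction of the extension is reversed in your description (coinvariants sit as the subobject, invariants shifted by one as the quotient), though this is irrelevant for the multiplicativity of characteristic ideals.
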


\begin{proof}
Since $G_i/G_{i+1}\simeq \overline{\langle \gamma_{i+1}\rangle}$ has $p$-cohomological dimension $1$ we have the
following exact sequences
\[ H^1(\overline{\langle\gamma_{i+1}\rangle}, H^{j-1}(G_{i+1}, \Sel)) \iri H^j(G_i, \Sel) \sri
H^j(G_{i+1}, \Sel)^{\overline{\langle\gamma_{i+1}\rangle}}\quad \forall\ j\geqslant 1\,,\]
which yield (taking duals)
\begin{equation}\label{EqFin1}
H^j(G_{i+1}, \Sel)^\vee/(\gamma_{i+1}-1) \iri H^j(G_i, \Sel)^\vee \sri
(H^{j-1}(G_{i+1}, \Sel)^\vee)^{\overline{\langle\gamma_{i+1}\rangle}} \ \ 1\leqslant j\leqslant d-i-1
\end{equation}
and
\begin{equation}\label{EqFin2}
H^{d-i}(G_1, \Sel)^\vee \simeq (H^{d-i-1}(G_{i+1},\Sel)^\vee)^{\overline{\langle\gamma_{i+1}\rangle}}
\end{equation}
(because ${\rm cd}_p(G_{i+1})=d-i-1$).

We will use repeatedly these sequences and Lemma \ref{BBL-NY} with $M=H^j(G_i, \Sel)^\vee$.
Let us suppose that $d$ is odd (the argument for $d$ even is exactly the same), then:
\begin{align*}
 &  \displaystyle{\prod_{i=0}^{d-1}} (Ch_{\L_1}(H^i(G_1, \Sel)^\vee) )^{(-1)^i} \notag\\
 &= \frac{Ch_{\L_1}((\Sel^{G_1})^\vee)
\displaystyle{\prod_{\begin{subarray}{c} 1\leqslant i\leqslant d-4 \\ i\ odd \end{subarray}}}
Ch_{\L_1}((H^i(G_2,\Sel)^\vee)^{\overline{\langle\gamma_2\rangle}})
\displaystyle{\prod_{\begin{subarray}{c} 2\leqslant i\leqslant d-3 \\ i\ even\end{subarray}}}
Ch_{\L_1}(H^i(G_2, \Sel)^\vee/(\gamma_2 -1)) \cdot
Ch_{\L_1}(\S^{G_1})}{\displaystyle{\prod_{\begin{subarray}{c} 0\leqslant i\leqslant d-3 \\ i\ even \end{subarray}}}
Ch_{\L_1} ((H^i(G_2,\Sel)^\vee)^{\overline{\langle\gamma_2\rangle}})
\displaystyle{\prod_{\begin{subarray}{c} 1\leqslant i\leqslant d-2 \\ i\ odd\end{subarray}}}
Ch_{\L_1}(H^i(G_2, \Sel)^\vee/(\gamma_2 -1))} \notag\\
 &= \frac{Ch_{\L_1}((\Sel^{G_1})^\vee)
\displaystyle{\prod_{\begin{subarray}{c} 2\leqslant i\leqslant d-3 \\ i\ even\end{subarray}}}
\pi^2_1(Ch_{\L_2}(H^i(G_2,\Sel)^\vee)) \cdot
Ch_{\L_1}(\S^{G_1})}{Ch_{\L_1}((H^0(G_2,\Sel)^\vee)^{\overline{\langle \gamma_2\rangle}})
\displaystyle{\prod_{\begin{subarray}{c} 1\leqslant i\leqslant d-4 \\ i\ odd\end{subarray}}}
\pi^2_1(Ch_{\L_2}(H^1(G_2,\Sel)^\vee))\ \cdot Ch_{\L_1}(H^{d-2}(G_2,\Sel)^\vee/(\gamma_2-1))}\,.
\end{align*}

Observe that (by equation \eqref{EqFin2})
\begin{align*}
Ch_{\L_1}(H^{d-2}(G_2, \Sel)^\vee/(\gamma_2 -1)) & =
Ch_{\L_1} \left((H^{d-2}(G_2, \Sel)^\vee)^{\overline{\langle\gamma_2\rangle}}\right) \pi^2_1\left( Ch_{\L_2} (\S^{G_2}) \right) \notag\\
\ & = Ch_{\L_1} \left(\left(\left(
H^{d-3}(G_3, \Sel)^\vee \right)^{\overline{\langle\gamma_3\rangle}}\right)^{\overline{\langle\gamma_2\rangle}}\right)
\pi^2_1\left( Ch_{\L_2} (\S^{G_2}) \right) \\
\ & \ \  \vdots  \\
\ & = Ch_{\L_1}(\S^{G_1}) \pi^2_1\left( Ch_{\L_2} (\S^{G_2}) \right)
\end{align*}
and (by Lemma \ref{BBL-NY})
\begin{align*}
Ch_{\L_1}((H^0(G_2,\Sel)^\vee)^{\overline{\langle \gamma_2\rangle}}) & = \displaystyle{
\frac{Ch_{\L_1}(H^0(G_2,\Sel)^\vee/(\gamma_2-1))}{\pi^2_1(Ch_{\L_2}(H^0(G_2,\Sel)^\vee))} } \notag\\
\ &  =\displaystyle{ \frac{Ch_{\L_1}((\Sel^{G_1})^\vee)}{\pi^2_1(Ch_{\L_2}(\Sel^{G_2})^\vee)} }\,.
\end{align*}

Then

\begin{align*}
 &  \displaystyle{\prod_{i=0}^{d-1}} (Ch_{\L_1}(H^i(G_1, \Sel)^\vee) )^{(-1)^i} \notag\\
& = \frac{\pi^2_1(Ch_{\L_2}(\Sel^{G_2})^\vee)
\displaystyle{\prod_{\begin{subarray}{c} 2\leqslant i\leqslant d-3 \\ i\ even\end{subarray}}}\pi^2_1(Ch_{\L_2}(H^i(G_2,\Sel)^\vee))}
{\displaystyle{\prod_{\begin{subarray}{c} 1\leqslant i\leqslant d-4 \\ i\ odd\end{subarray}}} \pi^2_1(Ch_{\L_2}(H^i(G_2,\Sel)^\vee))\cdot
\pi^2_1\left( Ch_{\L_2} (\S^{G_2}) \right)} \notag \\
 &= \frac{\pi^2_1(Ch_{\L_2}(\Sel^{G_2})^\vee)
\displaystyle{\prod_{\begin{subarray}{c} 1\leqslant i\leqslant d-4 \\ i\ odd\end{subarray}}}
\pi^2_1(Ch_{\L_2}(H^i(G_3,\Sel)^\vee)^{\overline{\langle \gamma_3\rangle}})
\displaystyle{\prod_{\begin{subarray}{c} 2\leqslant i\leqslant d-3 \\ i\ even\end{subarray}}}
\pi^2_1(Ch_{\L_2}(H^i(G_3,\Sel)^\vee/(\gamma_3-1)))}
{\displaystyle{\prod_{\begin{subarray}{c} 0\leqslant i\leqslant d-5 \\ i\ even\end{subarray}}}
\pi^2_1(Ch_{\L_2}(H^i(G_3,\Sel)^\vee)^{\overline{\langle \gamma_3\rangle}})
\displaystyle{\prod_{\begin{subarray}{c} 1\leqslant i\leqslant d-4 \\ i\ odd\end{subarray}}}
\pi^2_1(Ch_{\L_2}(H^i(G_3,\Sel)^\vee/(\gamma_3-1)))\cdot \pi^2_1\left( Ch_{\L_2} (\S^{G_2}) \right)} \notag \\
 &= \frac{\pi^3_1(Ch_{\L_3}(\Sel^{G_3})^\vee)
\displaystyle{\prod_{\begin{subarray}{c} 2\leqslant i\leqslant d-5 \\ i\ even\end{subarray}}}
\pi^3_1(Ch_{\L_3}(H^i(G_3,\Sel)^\vee))\cdot \pi^3_1(Ch_{\L_3}(\S^{G_3}))}
{\displaystyle{\prod_{\begin{subarray}{c} 1\leqslant i\leqslant d-4 \\ i\ odd\end{subarray}}}
\pi^3_1(Ch_{\L_3}(H^1(G_3, \Sel)^\vee))} \\
 & \ \ \vdots \\
 & = \pi^d_1(Ch_{\L_d}(\S)) \,.
\end{align*}
Now since
\[ f_{\S,G_1}:= \prod_{i=0}^{d-1} \ch_{\L_1}(H_i(G_1, \S))^{(-1)^i} = \prod_{i=0}^{d-1} \ch_{\L_1}(H^i(G_1, \Sel)^\vee)^{(-1)^i} \,,\]
\[ (\ch_{\L_1}(H^i(G_1, \Sel)^\vee)) = Ch_{\L_1}(H^i(G_1, \Sel)^\vee) \quad{\rm and}\quad
(\ch_{\L_d}(\S))= Ch_{\L_d}(\S)\]
the proposition follows.
\end{proof}

The following proposition (see \cite[Lemma 4.2]{CSS}) gives us a relation between the Euler characteristic
of $\S$ and its Akashi series.

\begin{prop}\label{WhyEuCh}
Assume that $\S$ has finite Euler characteristic, then the Akashi series of $\S$ is well defined and non-zero, and we have
\[  \chi(G,\S) = |\pi^d_0(f_{\S,G_1})|^{-1}_p \]
where $|\cdot|_p$ stands for the usual $p$-adic absolute value.
\end{prop}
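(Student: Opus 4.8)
The plan is to transfer the computation to the $\Z_p$-quotient $\G=G/G_1\simeq\Z_p$ and then use the classical relation over $\Z_p$ between a characteristic element and an Euler characteristic. For the extension $1\to G_1\to G\to\G\to 1$ the Hochschild--Serre spectral sequence in homology, $H_s(\G,H_t(G_1,\S))\Rightarrow H_{s+t}(G,\S)$, degenerates (as $\cd_p(\G)=1$) into short exact sequences
\[ 0\longrightarrow H_0\big(\G,H_n(G_1,\S)\big)\longrightarrow H_n(G,\S)\longrightarrow H_1\big(\G,H_{n-1}(G_1,\S)\big)\longrightarrow 0\,, \]
and, since $\G\simeq\Z_p$, $H_0(\G,M)=M/(\g_1-1)M$ while $H_1(\G,M)=M[\g_1-1]$. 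Throughout I read $\chi(G,\S)=\prod_{i\geq 0}|H_i(G,\S)|^{(-1)^i}$, the homological Euler characteristic of the compact module $\S$ (equal to $\prod_i|H^i(G,\Sel)|^{(-1)^i}$ by Pontryagin duality); so the hypothesis says exactly that all $H_i(G,\S)$ are finite, and the displayed sequences then force $H_0(\G,H_n(G_1,\S))$ and $H_1(\G,H_n(G_1,\S))$ to be finite for every $n$.

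First I would isolate the rank-one input as a lemma: for a finitely generated torsion $\L_1$-module $M$ with $M/(\g_1-1)M$ and $M[\g_1-1]$ finite --- equivalently with $\pi^1_0(\ch_{\L_1}(M))\neq 0$ --- one has
\[ \chi_\G(M):=\frac{|H_0(\G,M)|}{|H_1(\G,M)|}=\big|\pi^1_0(\ch_{\L_1}(M))\big|_p^{-1}\,. \]
The proof is standard: $\chi_\G$ is multiplicative on short exact sequences on which it is everywhere defined (long exact homology sequence, using $\cd_p(\G)=1$), and $\chi_\G=1$ on every finite module (an endomorphism of a finite group has kernel and cokernel of equal order); hence $\chi_\G$ is a pseudo-isomorphism invariant, and by the structure theorem one reduces to $M=\L_1/(f)$ with $f(0)\neq 0$, where $H_0(\G,M)=\Z_p/f(0)\Z_p$ has order $|f(0)|_p^{-1}=|\pi^1_0(f)|_p^{-1}$ and $H_1(\G,M)=0$ because $\g_1-1$ is prime in $\L_1=\Z_p[[t_1]]$ and coprime to $f$. (This is the case $d=1$ of the proposition.)

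Then I would assemble. Since $\S$ is finitely generated over $\L_d$ and $G$ is abelian, each $H_n(G_1,\S)$ (computed by the Koszul complex of $\S$ on $t_2,\dots,t_d$, hence annihilated by $(t_2,\dots,t_d)$) is finitely generated over $\L_d/(t_2,\dots,t_d)=\L_1$; together with the finiteness of $H_0(\G,H_n(G_1,\S))$ this forces it to be $\L_1$-torsion with $\pi^1_0(\ch_{\L_1}(H_n(G_1,\S)))\neq 0$, which already gives that $f_{\S,G_1}=\prod_n\ch_{\L_1}(H_n(G_1,\S))^{(-1)^n}$ is a well-defined nonzero element of $Q(\L_1)$ on which $\pi^1_0$ is defined and nonzero. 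Taking cardinalities in the short exact sequences, $|H_n(G,\S)|=|H_0(\G,H_n(G_1,\S))|\cdot|H_1(\G,H_{n-1}(G_1,\S))|$; the alternating product, after reindexing the second factor ($H_{-1}=0$), gives
\[ \chi(G,\S)=\prod_{n\geq 0}\chi_\G\big(H_n(G_1,\S)\big)^{(-1)^n}\,. \]
Applying the lemma termwise and pulling $\pi^1_0$ and $|\cdot|_p$ out of the product yields $\chi(G,\S)=|\pi^1_0(f_{\S,G_1})|_p^{-1}$; finally Proposition \ref{MainThmAk} ($f_{\S,G_1}\sim\pi^d_1(\ch_{\L_d}(\S))$) identifies $\pi^1_0(f_{\S,G_1})$ with $\pi^d_0(\ch_{\L_d}(\S))=\pi^d_0(f_{\S,G_1})$ up to a unit of $\Z_p$, which leaves $|\cdot|_p$ unchanged.

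The step I expect to be the real work is the rank-one lemma --- in particular the pseudo-isomorphism reduction, where one must verify that the finite error modules contribute trivially (this is exactly $\chi_\G(\text{finite})=1$) so that no spurious power of $p$ creeps in --- together with keeping straight that $\chi(G,\S)$ is the \emph{homological} Euler characteristic, so that the reindexing in the last step produces no extraneous $(-1)^d$. Once these are settled, the spectral-sequence collapse and the remaining bookkeeping are routine.
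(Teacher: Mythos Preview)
Your argument is correct and is essentially the proof that the paper is pointing to: the paper does not give its own proof here but simply cites \cite[Lemma 4.2]{CSS}, and the CSS argument is precisely the Hochschild--Serre collapse over $\Gamma\simeq\Z_p$ combined with the classical formula $\chi_\Gamma(M)=|\ch_{\L_1}(M)(0)|_p^{-1}$ that you isolate as your rank-one lemma. The bookkeeping (finite generation of $H_n(G_1,\S)$ over $\L_1$ via the Koszul complex, torsion and nonvanishing at the augmentation from finiteness of the $\Gamma$-homology, and the reindexing of the alternating product) is all fine.

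One small point: your step~7 is unnecessary and slightly dangerous. Since $\pi^d_0=\pi^1_0\circ\pi^d_1$, the map $\pi^d_0$ factors through $\L_1$, so writing $\pi^d_0(f_{\S,G_1})$ for an element $f_{\S,G_1}\in Q(\L_1)$ is just the augmentation $\pi^1_0(f_{\S,G_1})$; your step~6 already gives the statement. By contrast, invoking Proposition~\ref{MainThmAk} brings in the extra hypothesis $\S\in\M_{G_1}(G)$, which Proposition~\ref{WhyEuCh} does not assume (and which you have not derived from the finite Euler characteristic hypothesis). So drop step~7 and read $\pi^d_0$ on $\L_1$ as $\pi^1_0$.
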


\subsection{Application to constant ordinary abelian varieties}
This final section briefly presents the main results of \cite{LLTT} (for all the details see the original paper
and its companion \cite{LLTT1}), in order to provide a simple but (in our opinion) meaningful application
of our formula to the setting of constant ordinary abelian varieties. We keep notations as close as possible to
the ones in \cite{LLTT}.

Let $A$ be a constant ordinary abelian variety defined over the constant field $\F$ of $F$, then $\Gal(\overline{\F}/\F)$ acts
on $A[p^\infty]\simeq (\Q_p/\Z_p)^g$ via a {\em twist matrix} {\bf u} whose eigenvalues we denote by $\alpha_1,\dots,\alpha_g$
(counted with multiplicities). Let $\mathcal{O}$ be the ring of integers of a finite extension of $\Q_p$ containing
all the $\alpha_i$ and note that, in particular, $\alpha_i\in\mathcal{O}^*$ for any $i$. Assuming $S\neq\emptyset$ one can define
a Stickelberger series
\[ \Theta_{F_d,S}(u) := \prod_{v\not\in S}(1-[v]_{F_d}u^{\deg(v)})^{-1} \in \Lambda_d[[u]] \,,\]
where $[v]_{F_d}\in \Gal(F_d/F)$ is the arithmetic Frobenius at $v$ (the case $S=\emptyset$ just needs an extra factor
$1-Fr_q\cdot u$). It is not hard to see that $\Theta_{F_d,S}(u)$
behaves well under projections, i.e., $\pi^d_{d-1}(\Theta_{F_d,S}(u))=\Theta_{F_{d-1},S}(u)$.

\begin{defin}(The $p$-adic $\mathcal{L}$-function) Let
\[ \theta^+_{A,F_d,S} := \prod_{i=1}^g \Theta_{F_d,S}(\alpha_i^{-1})^\# \in \mathcal{O}[[G]] \]
be the {\em Stickelberger element} (where $\cdot^\#$ denotes the inversion $\mathcal{O}[[G]]\rightarrow\mathcal{O}[[G]]$,
$\gamma \rightarrow \gamma^{-1}$ for any $\gamma \in G$, all issues about this being a good definition and convergence are dealt
with in \cite{LLTT}) and define
\[ \mathcal{L}_{A,F_d} := \theta^+_{A,F_d,S}\cdot (\theta^+_{A,F_d,S})^\# \in \mathcal{O}[[G]] \]
as the {\em $p$-adic $\mathcal{L}$-function} associated to $A$ and $F_d$.
\end{defin}

Using a deep relation between duals of Selmer groups and divisor class groups (which Stickelberger elements are
usually associated to, see \cite[Proposition 3.18]{LLTT}), one can prove an interpolation formula ({\bf IF}) and
an Iwasawa Main Conjecture ({\bf IMC}) for $\mathcal{L}_{A,F_d}$.

\begin{thm} \label{LLTTThm}{\em (\cite[Theorems 4.7 and 4.9]{LLTT})}
For any continuous character $\omega: G\rightarrow \mathbb{C}^*$ one has
\[ {\rm ({\bf IF})}\qquad \omega(\mathcal{L}_{A,F_d})=c_{A,F_d,\omega}\cdot L(A,\omega,1) \,,\]
where $c_{A,F_d,\omega}$ is an explicit fudge factor and $L$ is the classic $L$-function of $A$ twisted by $\omega$.
Moreover
\[ {\rm ({\bf IMC})}\qquad \qquad Ch_{\mathcal{O}[[G]]}(\S)=(\mathcal{L}_{A,F_d}) \]
as ideals of $\mathcal{O}[[G]]$.
\end{thm}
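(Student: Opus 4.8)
The statement is essentially a recollection of the two main theorems of \cite{LLTT}, so the plan is to explain how the interpolation formula and the Iwasawa Main Conjecture follow from the structural dictionary between Selmer groups of a constant ordinary abelian variety and divisor class groups of the corresponding $\Z_p^d$-extension. First I would recall the key input from \cite[Proposition 3.18]{LLTT}: for a constant ordinary $A$ the Pontrjagin dual $\S$ of $Sel_A(F_d)_p$ is, up to twisting by the eigenvalues $\alpha_1,\dots,\alpha_g$ of the twist matrix $\mathbf{u}$, a product of $g$ copies of an Iwasawa module built from the $p$-part of the degree-zero divisor class group $Cl^0(F_d)$ (more precisely, of the inverse limit over the finite layers of these class groups). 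Concretely, one uses the isogeny decomposition coming from the ordinary reduction: $A[p^\infty]$ as a $\Gal(\ov{\F}/\F)$-module has a filtration with graded pieces $\Q_p/\Z_p$ on which Frobenius acts by $\alpha_i$, and this propagates to a description of $\S$ as $\bigoplus_{i=1}^g (\text{class group module})(\alpha_i)$ where $(\alpha_i)$ denotes the $\alpha_i$-twist.

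For the interpolation formula {\bf (IF)}, the plan is: (i) plug a continuous character $\omega\colon G\to\mathbb C^*$ into the definition $\mathcal L_{A,F_d}=\theta^+_{A,F_d,S}\cdot(\theta^+_{A,F_d,S})^\#$; (ii) evaluate $\omega$ on each factor $\Theta_{F_d,S}(\alpha_i^{-1})^\#$, recognizing $\omega(\Theta_{F_d,S}(\alpha_i^{-1}))$ as an Euler product $\prod_{v\notin S}(1-\omega([v]_{F_d})\alpha_i^{-1}(\ldots))^{-1}$ which, after the $\#$-involution pairs it with its ``conjugate'', reconstitutes the Hasse--Weil $L$-function $L(A,\omega,s)$ evaluated at $s=1$; this is where the eigenvalues $\alpha_i$ enter the local Euler factors of $A$ at the unramified places, and the $S$-truncation produces the fudge factor $c_{A,F_d,\omega}$ accounting for the removed places and the finitely many bad ones. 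The compatibility $\pi^d_{d-1}(\Theta_{F_d,S}(u))=\Theta_{F_{d-1},S}(u)$ noted before the definition guarantees these are consistent across layers, so the character evaluation is well defined.

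For the Main Conjecture {\bf (IMC)}, the plan is to transport the classical Iwasawa Main Conjecture for the class group side of $F_d/F$ (i.e. an identification of $Ch_{\mathcal O[[G]]}$ of the inverse limit of the $p$-parts of $Cl^0$ with the relevant Stickelberger ideal, which in the function-field/geometric setting is governed by the Weil-type zeta element and is accessible because $Cl^0$ is finite at every finite layer) through the twisting dictionary from \cite[Proposition 3.18]{LLTT}. Twisting a module by $\alpha_i\in\mathcal O^*$ twists its characteristic ideal by the corresponding automorphism of $\mathcal O[[G]]$, and taking the product over $i=1,\dots,g$ together with the $\#$-symmetrization matches exactly the definition of $\mathcal L_{A,F_d}$; multiplicativity of characteristic ideals in the (pseudo-)direct-sum decomposition of $\S$ then yields $Ch_{\mathcal O[[G]]}(\S)=(\mathcal L_{A,F_d})$. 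The main obstacle — and the reason this is only sketched here, with the real work residing in \cite{LLTT} and \cite{LLTT1} — is establishing the precise form of \cite[Proposition 3.18]{LLTT}: one must control the Selmer group via flat cohomology of $A[p^\infty]$ over $F_d$, reduce it through the connected--étale sequence of the ordinary $p$-divisible group to étale (Galois) cohomology, and then match the resulting arithmetic with the divisor class group data cleanly enough that the characteristic ideals coincide on the nose rather than merely up to a pseudo-null (or up to $\sim$) ambiguity; the finiteness hypotheses on $Sel_A(F)_p$ and the absence of $p$-torsion in $G$ are what make this clean matching possible.
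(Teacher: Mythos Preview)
Your proposal is a reasonable high-level sketch of the argument in \cite{LLTT}, but you should note that the present paper offers no proof of this statement whatsoever: Theorem~\ref{LLTTThm} is simply quoted from \cite[Theorems~4.7 and~4.9]{LLTT} (as the attribution in the statement makes explicit), and the paper immediately uses it as a black box to deduce Corollary~\ref{AkSerStickEl}. So there is nothing to compare on the paper's side beyond the citation.

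As for the content of your sketch, it captures the right architecture---reducing $\S$ to twisted class-group modules via \cite[Proposition~3.18]{LLTT}, recovering the Hasse--Weil Euler product from the Stickelberger series under a character $\omega$, and transporting the function-field Main Conjecture for class groups through the twist---but it remains just that, an outline. The substantive work (the precise form of the Selmer/class-group dictionary, control of pseudo-null error terms, and the exact computation of the fudge factor $c_{A,F_d,\omega}$) is not something one can supply without reproducing large parts of \cite{LLTT} and \cite{LLTT1}. If your intent was to indicate why the cited result is plausible, the sketch serves that purpose; if your intent was to give a self-contained proof, it does not, and the honest thing is to do as the paper does and cite the source.
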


This deep result allows us to conclude with the following

\begin{cor}\label{AkSerStickEl}
With notations as above, one has
\[ \chi(G,\S) = |\pi^d_0(\mathcal{L}_{A,F_d})|^{-1}_p \,.\]
\end{cor}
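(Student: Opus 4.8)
The plan is to combine two results already established in the excerpt: Proposition \ref{WhyEuCh}, which expresses the Euler characteristic $\chi(G,\S)$ in terms of the Akashi series via $\chi(G,\S) = |\pi^d_0(f_{\S,G_1})|^{-1}_p$, and Proposition \ref{MainThmAk}, which identifies the Akashi series $f_{\S,G_1}$ with $\pi^d_1(\ch_{\L_d}(\S))$ up to the equivalence $\sim$. The only genuinely new ingredient needed is the Iwasawa Main Conjecture from Theorem \ref{LLTTThm}, which gives $Ch_{\O[[G]]}(\S) = (\mathcal{L}_{A,F_d})$ as ideals; note that $\O[[G]] = \O \otimes_{\Z_p} \L_d$ is the relevant Iwasawa algebra here (the coefficients have been extended from $\Z_p$ to $\O$ so that the twist-matrix eigenvalues $\alpha_i$ live in the ring).

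First I would invoke Proposition \ref{WhyEuCh}: since $\S$ has finite Euler characteristic (which is the standing hypothesis of Corollary \ref{AkSerStickEl}, implicit in writing $\chi(G,\S)$), the Akashi series is well defined and $\chi(G,\S) = |\pi^d_0(f_{\S,G_1})|^{-1}_p$. Next, apply Proposition \ref{MainThmAk} to replace $f_{\S,G_1}$ by $\pi^d_1(\ch_{\L_d}(\S))$ up to a unit in $\L_1$; since $\pi^d_0$ factors as $\pi^1_0 \circ \pi^d_1$ and $\pi^1_0$ sends units of $\L_1$ to units of $\Z_p$ (i.e. elements of $p$-adic absolute value $1$), we get $|\pi^d_0(f_{\S,G_1})|_p = |\pi^d_0(\ch_{\L_d}(\S))|_p$. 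Finally, by the {\bf IMC} the characteristic ideal $Ch_{\O[[G]]}(\S)$ is generated by $\mathcal{L}_{A,F_d}$, so $\ch_{\L_d}(\S)$ and $\mathcal{L}_{A,F_d}$ differ by a unit in $\O[[G]]$; applying $\pi^d_0$ (extended $\O$-linearly, landing in $\O$) and then the $p$-adic absolute value — which is insensitive to multiplication by units of $\O$ — yields $|\pi^d_0(\ch_{\L_d}(\S))|_p = |\pi^d_0(\mathcal{L}_{A,F_d})|_p$, and the corollary follows by concatenating these equalities.

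The main point requiring care — more a bookkeeping obstacle than a deep one — is the compatibility of coefficient rings: Propositions \ref{WhyEuCh} and \ref{MainThmAk} are stated over $\L_d = \Z_p[[G]]$, whereas the {\bf IMC} is stated over $\O[[G]]$. One must check that the characteristic element of $\S$ over $\L_d$ and over $\O[[G]]$ agree after the obvious base change (equivalently, that $\S \otimes_{\Z_p} \O$ has the expected characteristic ideal), and that the augmentation maps $\pi^d_0 \colon \L_d \to \Z_p$ and $\pi^d_0 \colon \O[[G]] \to \O$ are compatible with this base change, so that the value $\pi^d_0(\ch_{\L_d}(\S))$ computed in $\Z_p$ coincides (up to an $\O$-unit, hence up to $p$-adic absolute value) with $\pi^d_0(\mathcal{L}_{A,F_d})$ computed in $\O$. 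Since $\O/\Z_p$ is finite unramified-or-ramified but in any case $\O^* $ consists of elements of absolute value $1$ and the residue extension does not change the normalized $p$-adic valuation, this causes no harm; I would state this reconciliation in one sentence and refer to \cite{LLTT} for the precise comparison of characteristic ideals. With that remark in place the proof is a three-line chain of identities.
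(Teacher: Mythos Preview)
Your proposal is correct and follows exactly the same route as the paper's own proof, which simply says to apply Propositions \ref{MainThmAk} and \ref{WhyEuCh} to the {\bf IMC} formula of Theorem \ref{LLTTThm}. Your additional remark on the $\Z_p$-versus-$\O$ coefficient compatibility is a reasonable elaboration that the paper leaves implicit.
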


\begin{proof}
One simply needs to apply Propositions \ref{MainThmAk} and \ref{WhyEuCh} to the {\bf IMC} formula provided by
Theorem \ref{LLTTThm}.
\end{proof}

\end{document}